\newtheorem{theorem}{Theorem}[section]
\newtheorem{proposition}[theorem]{Proposition}
\newtheorem{corollary}[theorem]{Corollary}
\theoremstyle{definition}
\theoremstyle{remark}
\newtheorem{remark}[theorem]{Remark}
\numberwithin{equation}{section}
\begin{document}

\title{Reflection formulas for order derivatives of Bessel functions}


\author{J. L. Gonz\'{a}lez-Santander}
\address{C/ Ovidi Montllor i Mengual 7, pta. 9. 46017, Valencia, Spain.}
\email{juanluis.gonzalezsantander@gmail.com}


\subjclass[2010]{Primary 33C10, 26A06}

\date{}


\commby{Mourad Ismail}

\begin{abstract}
From new integral representations of the $n$-th derivative of Bessel
functions with respect to the order, we derive some reflection formulas for
the first and second order derivative of $J_{\nu }\left( t\right) $ and $%
Y_{\nu }\left( t\right) $ for integral order, and for the $n$-th order
derivative of $I_{\nu }\left( t\right) $ and $K_{\nu }\left( t\right) $ for
arbitrary real order. As an application of the reflection formulas obtained
for the first order derivative, we extend some formulas given in the
literature to negative integral order. Also, as a by-product, we calculate
an integral which does not seem to be reported in the literature. 
\end{abstract}

\maketitle


\section{Introduction}

Bessel functions are the canonical solutions $y\left( t\right) $ of Bessel's
differential equation:%
\begin{equation}
t^{2}y^{\prime \prime }+t\,y^{\prime }+\left( t^{2}-\nu ^{2}\right) y=0,
\label{Eqn_Bessel}
\end{equation}%
where $\nu $ denotes the order of the Bessel function. This equation arises
when finding separable solutions of Laplace equation in cylindrical
coordinates, as well as in Helmholtz equation in spherical coordinates \cite[%
Chap. 6]{Lebedev}. The general solution of (\ref{Eqn_Bessel})\ is a linear
combination of the \textit{Bessel functions of the first and second kind},
i.e.\textit{\ }$J_{\nu }\left( t\right) $ and $Y_{\nu }\left( t\right) $
respectively. In the case of pure imaginary argument, the solutions to the Bessel
equations are called \textit{modified Bessel functions of the first and
second kind, }$I_{\nu }\left( t\right) $ and $K_{\nu }\left( t\right) $
respectively. 
Despite the fact the properties of the Bessel functions have been studied
extensively in the literature \cite{Watson, Andrews}, studies about
successive derivatives of the Bessel functions with respect to the order $%
\nu $ are relatively scarce. For nonnegative integral order $\nu =m$, we
find in the literature the following expressions in terms of finite sums of
Bessel functions \cite[Eqn. 10.15.3\&4]{NIST}%
\begin{equation}
\left. \frac{\partial J_{\nu }\left( t\right) }{\partial \nu }\right\vert
_{\nu =m}=\frac{\pi }{2}Y_{m}\left( t\right) +\frac{m!}{2}\sum_{k=0}^{m-1}%
\frac{J_{k}\left( t\right) }{k!\left( m-k\right) }\left( \frac{t}{2}\right)
^{k-m},  \label{DJ_integral}
\end{equation}%
and%
\begin{equation}
\left. \frac{\partial Y_{\nu }\left( t\right) }{\partial \nu }\right\vert
_{\nu =m}=-\frac{\pi }{2}J_{m}\left( t\right) +\frac{m!}{2}\sum_{k=0}^{m-1}%
\frac{Y_{k}\left( t\right) }{k!\left( m-k\right) }\left( \frac{t}{2}\right)
^{k-m}.  \label{DY_integral}
\end{equation}

For modified Bessel functions, we have \cite[Eqn. 10.38.3\&4]{NIST}%
\begin{equation}
\left. \frac{\partial I_{\nu }\left( t\right) }{\partial \nu }\right\vert
_{\nu =m}=\left( -1\right) ^{m}\left[ -K_{m}\left( t\right) +\frac{m!}{2}%
\sum_{k=0}^{m-1}\frac{\left( -1\right) ^{k}I_{k}\left( t\right) }{k!\left(
m-k\right) }\left( \frac{t}{2}\right) ^{k-m}\right] ,  \label{DI_integral}
\end{equation}%
and%
\begin{equation}
\left. \frac{\partial K_{\nu }\left( t\right) }{\partial \nu }\right\vert
_{\nu =m}=\frac{m!}{2}\sum_{k=0}^{m-1}\frac{K_{k}\left( t\right) }{k!\left(
m-k\right) }\left( \frac{t}{2}\right) ^{k-m}.  \label{DK_integral}
\end{equation}

Also, for the $n$-th derivative of the Bessel function of the first kind
with respect to the order, we find in \cite{Sesma}\ a more complex
expression in series form.

Regarding integral representations of the derivative of $J_{\nu }\left(
t\right) $ and $I_{\nu }\left( t\right) $ with respect to the order, we find
in \cite{Apelblat} $\forall \Re \nu >0$,%
\begin{equation}
\frac{\partial J_{\nu }\left( t\right) }{\partial \nu }=\pi \nu
\int_{0}^{\pi /2}\tan \theta \ Y_{0}\left( t\sin ^{2}\theta \right) J_{\nu
}\left( t\cos ^{2}\theta \right) d\theta ,  \label{DJnu_int_Apelblat}
\end{equation}%
and%
\begin{equation}
\frac{\partial I_{\nu }\left( t\right) }{\partial \nu }=-2\nu \int_{0}^{\pi
/2}\tan \theta \ K_{0}\left( t\sin ^{2}\theta \right) I_{\nu }\left( t\cos
^{2}\theta \right) d\theta .  \label{DInu_int_Apelblat}
\end{equation}

Other integral representations of the order derivative of $J_{\nu }\left(
z\right) $ and $Y_{\nu }\left( z\right) $ are given in \cite{Dunster} for $%
\nu >0$ and $t\neq 0$,$\left\vert \mathrm{arg}\ t\right\vert \leq \pi $,
which read as,%
\begin{equation}
\frac{\partial J_{\nu }\left( t\right) }{\partial \nu }=\pi \nu \left[
Y_{\nu }\left( t\right) \int_{0}^{t}\frac{J_{\nu }^{2}\left( z\right) }{t}%
dz+J_{\nu }\left( t\right) \int_{t}^{\infty }\frac{J_{\nu }\left( z\right)
Y_{\nu }\left( z\right) }{z}dz\right] ,  \label{DJnu_int_Dunster}
\end{equation}%
and%
\begin{eqnarray}
&&\frac{\partial Y_{\nu }\left( t\right) }{\partial \nu }
\label{DYnu_int_Dunster} \\
&=&\pi \nu \left[ J_{\nu }\left( t\right) \left( \int_{t}^{\infty }\frac{%
Y_{\nu }^{2}\left( z\right) }{z}dz-\frac{1}{2\nu }\right) -Y_{\nu }\left(
t\right) \int_{t}^{\infty }\frac{J_{\nu }\left( z\right) Y_{\nu }\left(
z\right) }{z}dz\right] .  \notag
\end{eqnarray}

Recently, in \cite{DBesselJL}, we find the following integral
representations of the derivatives of the modified Bessel functions $I_{\nu
}\left( t\right) $ and $K_{\nu }\left( t\right) $ with respect to the order
for $\nu >0$ and $t\neq 0$,$\left\vert \mathrm{arg}\ t\right\vert \leq \pi $,%
\begin{equation}
\frac{\partial I_{\nu }\left( t\right) }{\partial \nu }=-2\nu \left[ I_{\nu
}\left( t\right) \int_{t}^{\infty }\frac{K_{\nu }\left( z\right) I_{\nu
}\left( z\right) }{z}dz+K_{\nu }\left( t\right) \int_{0}^{t}\frac{I_{\nu
}^{2}\left( z\right) }{z}dz\right] ,  \label{DInu_int}
\end{equation}%
and%
\begin{equation}
\frac{\partial K_{\nu }\left( t\right) }{\partial \nu }=2\nu \left[ K_{\nu
}\left( t\right) \int_{t}^{\infty }\frac{I_{\nu }\left( z\right) K_{\nu
}\left( z\right) }{z}dz-I_{\nu }\left( t\right) \int_{t}^{\infty }\frac{%
K_{\nu }^{2}\left( z\right) }{z}dz\right] .  \label{DKnu_int}
\end{equation}

The great advantage of the integral expressions (\ref{DJnu_int_Dunster})-(%
\ref{DKnu_int})\ is that the integrals involved in them can be calculated in
closed-form \cite{DBesselJL}. Also, $\forall \nu \notin 
\mathbb{Z}
$, expressions in closed-form for the second and third derivatives with
respect to the order are found in \cite{BrychovNew}, but these expressions
are extraordinarily complex, above all for the third derivative.

In view of the literature commented above, the goal of this article is
two-folded. On the one hand, in Section \ref{Section: Derivatives}, we
obtain simple integral representations for the $n$-th derivatives of the
Bessel functions with respect to the order. The great advantage of these
expressions is that its numerical evaluation is quite rapid and
straightforward. As a by-product, we obtain the calculation of an integral
which does not seem to be reported in the literature.

On the other hand, in Section \ref{Section: Reflection formulas}, we derive
some reflection formulas for the first and second order derivative of $%
J_{\nu }\left( t\right) $ and $Y_{\nu }\left( t\right) $ for integral order,
from the expressions obtained in Section \ref{Section: Derivatives}. Also,
we derive reflection formulas for the $n$-th order derivative of $I_{\nu
}\left( t\right) $ and $K_{\nu }\left( t\right) $ for arbitrary real order.
As an application of the reflection formulas obtained for the first order
derivative, we extend formulas (\ref{DJ_integral})-(\ref{DK_integral}) to
negative integral orders.

Finally, we collect our conclusions in Section \ref{Section: Conclusions}.

\section{Integral representations of $n$-th order derivatives\label{Section:
Derivatives}}

In order to perform the $n$-th derivatives of Bessel and modified Bessel
functions with respect to the order, first we state the following $n$-th
derivatives, that can be proved easily by induction and using the binomial
theorem.

\begin{proposition}
The $n$-th derivative of the functions 
\begin{eqnarray*}
f_{1}\left( \nu \right) &=&\cos \left( t\sin x-\nu x\right) , \\
f_{2}\left( \nu \right) &=&\sin \left( t\sin x-\nu x\right) , \\
f_{3}\left( \nu \right) &=&e^{-\nu x}\sin \pi \nu =\Im \left( e^{\left( i\pi
-x\right) \nu }\right) , \\
f_{4}\left( \nu \right) &=&e^{-\nu x}\cos \pi \nu =\Re \left( e^{\left( i\pi
-x\right) \nu }\right) ,
\end{eqnarray*}%
with respect to the order $\nu $ are given by%
\begin{eqnarray}
\ f_{1}^{\left( n\right) }\left( \nu \right) &=&x^{n}\cos \left( t\sin x-\nu
x-n\pi /2\right) ,  \label{f1(n)} \\
\ f_{2}^{\left( n\right) }\left( \nu \right) &=&x^{n}\sin \left( t\sin x-\nu
x-n\pi /2\right) ,  \label{f2(n)} \\
f_{3}^{\left( n\right) }\left( \nu \right) &=&e^{-\nu x}\Im \left[ \left(
i\pi -x\right) ^{n}e^{i\pi \nu }\right]  \label{f3(n)} \\
&=&e^{-\nu x}\left[ p_{n}\left( x\right) \sin \pi \nu +q_{n}\left( x\right)
\cos \pi \nu \right] ,  \notag \\
f_{4}^{\left( n\right) }\left( \nu \right) &=&e^{-\nu x}\Re \left[ \left(
i\pi -x\right) ^{n}e^{i\pi \nu }\right]  \label{f4(n)} \\
&=&e^{-\nu x}\left[ p_{n}\left( x\right) \cos \pi \nu -q_{n}\left( x\right)
\sin \pi \nu \right] .  \notag
\end{eqnarray}%
where we have set the polynomials%
\begin{eqnarray}
p_{n}\left( x\right) &=&\Re \left[ \left( i\pi -x\right) ^{n}\right]
\label{pn(x)_def} \\
&=&\sum_{k=0}^{\left\lfloor n/2\right\rfloor }\binom{n}{2k}\left( -1\right)
^{n+k}\pi ^{2k}x^{n-2k},  \notag \\
q_{n}\left( x\right) &=&\Im \left[ \left( i\pi -x\right) ^{n}\right]
\label{qn(x)_def} \\
&=&\sum_{k=0}^{\left\lfloor \left( n-1\right) /2\right\rfloor }\binom{n}{2k+1%
}\left( -1\right) ^{n+k+1}\pi ^{2k+1}x^{n-2k-1}.  \notag
\end{eqnarray}
\end{proposition}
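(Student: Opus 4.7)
The plan is to treat the two pairs $(f_1,f_2)$ and $(f_3,f_4)$ by slightly different but elementary routes: a direct induction for the first pair, and a complex-exponential shortcut followed by the binomial theorem for the second.

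For $f_1$ and $f_2$, I would set $\phi(\nu)=t\sin x-\nu x$ so that $d\phi/d\nu=-x$, and simply verify (\ref{f1(n)}) and (\ref{f2(n)}) by induction on $n$. The base case $n=0$ is trivial. For the inductive step, differentiating $x^n\cos(\phi-n\pi/2)$ with respect to $\nu$ gives $x^{n+1}\sin(\phi-n\pi/2)$, which equals $x^{n+1}\cos(\phi-(n+1)\pi/2)$ by the standard shift identity $\sin\alpha=\cos(\alpha-\pi/2)$. The case of $f_2$ is identical, using $\cos\alpha=\sin(\alpha+\pi/2)$ and checking signs.

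For $f_3$ and $f_4$, I would exploit the fact that $e^{(i\pi-x)\nu}$ is an exponential in $\nu$ with complex rate $i\pi-x$, so differentiation is immediate:
\begin{equation*}
\frac{d^n}{d\nu^n}e^{(i\pi-x)\nu}=(i\pi-x)^n e^{(i\pi-x)\nu}=(i\pi-x)^n e^{-\nu x}\bigl(\cos\pi\nu+i\sin\pi\nu\bigr).
\end{equation*}
Since differentiation commutes with taking real and imaginary parts, this yields the first equalities in (\ref{f3(n)}) and (\ref{f4(n)}) at once. To reach the second equalities, I would expand $(i\pi-x)^n$ via the binomial theorem and split the sum by the parity of the summation index, writing $(i\pi-x)^n=p_n(x)+iq_n(x)$ where $p_n,q_n$ are exactly the polynomials defined in (\ref{pn(x)_def})--(\ref{qn(x)_def}); the sign $(-1)^{n+k}$ (respectively $(-1)^{n+k+1}$) comes from combining the $i^{2k}=(-1)^k$ (resp.\ $i^{2k+1}=i(-1)^k$) factor with the $(-x)^{n-2k}$ (resp.\ $(-x)^{n-2k-1}$) factor. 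Multiplying $(p_n+iq_n)$ by $(\cos\pi\nu+i\sin\pi\nu)$ and taking real and imaginary parts produces the stated formulas.

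The work is essentially mechanical; the only mildly delicate point is the sign bookkeeping in the binomial expansion of $(i\pi-x)^n$, where one must keep track simultaneously of the $(-1)^{n-k}$ from $(-x)^{n-k}$ and the powers of $i$, and then reindex by $k\mapsto 2k$ or $k\mapsto 2k+1$ to match (\ref{pn(x)_def}) and (\ref{qn(x)_def}). No genuine obstacle is expected.
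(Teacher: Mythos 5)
Your proposal is correct and matches the paper's intended argument: the paper gives no detailed proof, stating only that the formulas follow ``by induction and using the binomial theorem,'' which is exactly your route (induction with the $\pi/2$ phase-shift identity for $f_1,f_2$, and the complex exponential $e^{(i\pi-x)\nu}$ expanded by the binomial theorem, split by parity of the index, for $f_3,f_4$). Your sign bookkeeping, using $(-1)^{n-2k}=(-1)^n$ and $i^{2k}=(-1)^k$, reproduces the stated $p_n$ and $q_n$ precisely.
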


To obtain the $n$-th derivative of the Bessel function of the first kind
with respect to the order, we depart from the Schl\"{a}fli integral
representation of $J_{\nu }\left( t\right) $ \cite[Eqn. 10.9.6]{NIST},
wherein we have $\forall \Re t>0$, 
\begin{equation}
J_{\nu }\left( t\right) =\frac{1}{\pi }\int_{0}^{\pi }\cos \left( t\sin
x-\nu x\right) dx-\frac{\sin \nu \pi }{\pi }\int_{0}^{\infty }e^{-t\sinh
x-\nu x}dx.  \label{Jnu_Int}
\end{equation}

Therefore, applying (\ref{f1(n)})\ and (\ref{f3(n)}), the $n$-th derivative
of $J_{\nu }\left( t\right) $ with respect to the order is%
\begin{eqnarray}
\frac{\partial ^{n}}{\partial \nu ^{n}}J_{\nu }\left( t\right) &=&\frac{1}{%
\pi }\int_{0}^{\pi }x^{n}\cos \left( t\sin x-\nu x-\frac{\pi }{2}n\right) dx
\label{Dn_Jnu} \\
&&-\frac{1}{\pi }\int_{0}^{\infty }e^{-t\sinh x-\nu x}\Im \left[ \left( i\pi
-x\right) ^{n}e^{i\pi \nu }\right] dx  \notag \\
&=&\frac{1}{\pi }\int_{0}^{\pi }x^{n}\cos \left( t\sin x-\nu x-\frac{\pi }{2}%
n\right) dx  \label{Dn_Jnu_a} \\
&&-\frac{1}{\pi }\int_{0}^{\infty }e^{-t\sinh x-\nu x}\left[ p_{n}\left(
x\right) \sin \pi \nu +q_{n}\left( x\right) \cos \pi \nu \right] dx.  \notag
\end{eqnarray}

\begin{remark}
As a consistency test, notice that the Taylor series, 
\begin{equation}
J_{\nu }\left( t\right) =\sum_{k=0}^{\infty }\left. \frac{\partial
^{k}J_{\nu }\left( t\right) }{\partial \nu ^{k}}\right\vert _{\nu =\nu _{0}}%
\frac{\left( \nu -\nu _{0}\right) ^{k}}{k!},  \label{Taylor_J}
\end{equation}%
holds true $\forall \nu _{0}\in 
\mathbb{R}
$. Similar Taylor series hold true for $Y_{\nu }\left( t\right) $, $I_{\nu
}\left( t\right) $ and $K_{\nu }\left( t\right) $.
\end{remark}

\begin{proof}
Substituting (\ref{Dn_Jnu})\ on the LHS\ of (\ref{Taylor_J}), and reversing
the order of integration and summation, we have%
\begin{eqnarray*}
J_{\nu }\left( t\right)  &=&\frac{1}{\pi }\int_{0}^{\pi }\sum_{k=0}^{\infty }%
\frac{\left( \nu -\nu _{0}\right) ^{k}}{k!}x^{k}\cos \left( t\sin x-\nu
_{0}x-\frac{\pi }{2}k\right) dx \\
&&-\frac{1}{\pi }\int_{0}^{\infty }e^{-t\sinh x-\nu _{0}x}\sum_{k=0}^{\infty
}\frac{\left( \nu -\nu _{0}\right) ^{k}}{k!}\Im \left[ \left( i\pi -x\right)
^{k}e^{i\pi \nu _{0}}\right] dx \\
&=&\frac{1}{\pi }\int_{0}^{\pi }\Re \left[ e^{i\left( t\sin x-\nu
_{0}x\right) }\sum_{k=0}^{\infty }\frac{\left[ x\left( \nu -\nu _{0}\right) %
\right] ^{k}}{k!}e^{-i\pi k/2}\right] dx \\
&&-\frac{1}{\pi }\int_{0}^{\infty }e^{-t\sinh x-\nu _{0}x}\Im \left[ e^{i\pi
\nu _{0}}\sum_{k=0}^{\infty }\frac{\left[ \left( i\pi -x\right) \left( \nu
-\nu _{0}\right) \right] ^{k}}{k!}\right] dx \\
&=&\frac{1}{\pi }\int_{0}^{\pi }\Re \left[ e^{i\left( t\sin x-\nu
_{0}x\right) }e^{-ix\left( \nu -\nu _{0}\right) }\right] dx \\
&&-\frac{1}{\pi }\int_{0}^{\infty }e^{-t\sinh x-\nu _{0}x}\Im \left[ e^{i\pi
\nu _{0}}e^{\left( i\pi -x\right) \left( \nu -\nu _{0}\right) }\right] dx,
\end{eqnarray*}%
which is equivalent to Schl\"{a}fli integral representation (\ref{Jnu_Int}).
\end{proof}

For the Bessel function of the second kind, we obtain, from the integral
representation \cite[Eqn. 10.9.7]{NIST}, $\forall \Re t>0$, 
\begin{eqnarray}
Y_{\nu }\left( t\right) &=&\frac{1}{\pi }\int_{0}^{\pi }\sin \left( t\sin
x-\nu x\right) dx  \label{Ynu_int} \\
&&-\frac{1}{\pi }\int_{0}^{\infty }e^{-t\sinh x}\left( e^{\nu x}+e^{-\nu
x}\cos \nu \pi \right) dx,  \notag
\end{eqnarray}%
and formulas (\ref{f2(n)})\ and (\ref{f4(n)}), the following $n$-th order
derivative: 
\begin{eqnarray}
&&\frac{\partial ^{n}}{\partial \nu ^{n}}Y_{\nu }\left( t\right)
\label{Dn_Ynu} \\
&=&\frac{1}{\pi }\int_{0}^{\pi }x^{n}\sin \left( t\sin x-\nu x-\frac{\pi }{2}%
n\right) dx  \notag \\
&&-\frac{1}{\pi }\int_{0}^{\infty }e^{-t\sinh x}\left( x^{n}e^{\nu
x}+e^{-\nu x}\Re \left[ \left( i\pi -x\right) ^{n}e^{i\pi \nu }\right]
\right) dx  \notag \\
&=&\frac{1}{\pi }\int_{0}^{\pi }x^{n}\sin \left( t\sin x-\nu x-\frac{\pi }{2}%
n\right) dx  \label{Dn_Ynu_a} \\
&&-\frac{1}{\pi }\int_{0}^{\infty }e^{-t\sinh x}  \notag \\
&&\quad \left( x^{n}e^{\nu x}+e^{-\nu x}\left[ p_{n}\left( x\right) \cos \pi
\nu -q_{n}\left( x\right) \sin \pi \nu \right] \right) dx  \notag
\end{eqnarray}

For the modified Bessel function, we find in the literature the integral
representation \cite[Eqn. 10.32.4]{NIST}, $\forall \Re t>0$, 
\begin{equation}
I_{\nu }\left( t\right) =\frac{1}{\pi }\int_{0}^{\pi }e^{t\cos x}\cos \nu x\
dx-\frac{\sin \nu \pi }{\pi }\int_{0}^{\infty }e^{-t\cosh x-\nu x}dx.
\label{Inu_int}
\end{equation}

Therefore, according to (\ref{f1(n)}) with $t=0$\ and (\ref{f3(n)}), we have%
\begin{eqnarray}
&&\frac{\partial ^{n}}{\partial \nu ^{n}}I_{\nu }\left( t\right)  \notag \\
&=&\frac{1}{\pi }\int_{0}^{\pi }x^{n}e^{t\cos x}\cos \left( \nu x+\frac{\pi 
}{2}n\right) dx  \label{Dn_Inu} \\
&&-\frac{1}{\pi }\int_{0}^{\infty }e^{-t\cosh x-\nu x}\Im \left[ \left( i\pi
-x\right) ^{n}e^{i\pi \nu }\right] dx  \notag \\
&=&\frac{1}{\pi }\int_{0}^{\pi }x^{n}e^{t\cos x}\cos \left( \nu x+\frac{\pi 
}{2}n\right) dx  \label{Dn_Inu_a} \\
&&-\frac{1}{\pi }\int_{0}^{\infty }e^{-t\cosh x-\nu x}\left[ p_{n}\left(
x\right) \sin \pi \nu +q_{n}\left( x\right) \cos \pi \nu \right] dx.  \notag
\end{eqnarray}

Also, from the integral representation of the Macdonald function \cite[Eqn.
5.10.23]{Lebedev}, $\forall \Re t>0$, 
\begin{eqnarray}
K_{\nu }\left( t\right) &=&\int_{0}^{\infty }e^{-t\cosh x}\cosh \nu x\,dx
\label{Knu_int_0} \\
&=&\frac{1}{2}\int_{-\infty }^{\infty }e^{\nu x-t\cosh x}\ dx,
\label{Knu_int}
\end{eqnarray}%
we have%
\begin{eqnarray}
\frac{\partial ^{n}}{\partial \nu ^{n}}K_{\nu }\left( t\right) &=&\frac{1}{2}%
\int_{0}^{\infty }x^{n}e^{-t\cosh x}\left[ e^{\nu x}+\left( -1\right)
^{n}e^{-\nu x}\right] \,dx  \label{Dn_Knu_0} \\
&=&\frac{1}{2}\int_{-\infty }^{\infty }x^{n}e^{\nu x-t\cosh x}dx.
\label{Dn_Knu}
\end{eqnarray}

For $n=1$, the above integral (\ref{Dn_Knu})\ is calculated in \cite%
{DBesselJL}\ in closed-form, thus $\forall \nu \in 
\mathbb{R}
\diagdown \left\{ -1/2,-3/2,...\right\} $, $\Re t>0$, we have%
\begin{eqnarray}
&&\int_{-\infty }^{\infty }x\,e^{\nu x-t\cosh x}dx  \label{int_new_G} \\
&=&\nu \left[ \frac{K_{\nu }\left( z\right) }{\sqrt{\pi }}%
G_{2,4}^{3,1}\left( z^{2}\left\vert 
\begin{array}{c}
1/2,1 \\ 
0,0,\nu ,-\nu%
\end{array}%
\right. \right) -\sqrt{\pi }I_{\nu }\left( z\right) G_{2,4}^{4,0}\left(
z^{2}\left\vert 
\begin{array}{c}
1/2,1 \\ 
0,0,\nu ,-\nu%
\end{array}%
\right. \right) \right] ,  \notag
\end{eqnarray}%
where the function $G_{p,q}^{m,n}$ denotes the Meijer-$G$ function \cite[%
Eqn. 16.17.1]{NIST}. If $2\nu \notin 
\mathbb{Z}
$, the above expression is reduced in terms of generalized hypergeometric
functions $_{p}F_{q}$ \cite[Eqn. 16.2.1]{NIST} as \cite{BrychovNew},%
\begin{eqnarray}
&&\int_{-\infty }^{\infty }x\,e^{\nu x-t\cosh x}dx  \label{int_new_F} \\
&=&\pi \csc \pi \nu \left\{ \pi \cot \pi \nu \,I_{\nu }\left( z\right) - 
\left[ I_{\nu }\left( z\right) +I_{-\nu }\left( z\right) \right] 
\begin{array}{c}
\displaystyle
\\ 
\displaystyle%
\end{array}%
\right.  \notag \\
&&\left. \left[ \frac{z^{2}}{4\left( 1-\nu ^{2}\right) }\,_{3}F_{4}\left(
\left. 
\begin{array}{c}
1,1,\frac{3}{2} \\ 
2,2,2-\nu ,2+\nu%
\end{array}%
\right\vert z^{2}\right) +\log \left( \frac{z}{2}\right) -\psi \left( \nu
\right) -\frac{1}{2\nu }\right] \right\}  \notag \\
&&+\frac{1}{2}\left\{ I_{-\nu }\left( z\right) \Gamma ^{2}\left( -\nu
\right) \left( \frac{z}{2}\right) ^{2\nu }\,_{2}F_{3}\left( \left. 
\begin{array}{c}
\nu ,\frac{1}{2}+\nu \\ 
1+\nu ,1+\nu ,1+2\nu%
\end{array}%
\right\vert z^{2}\right) \right.  \notag \\
&&\quad -\left. I_{\nu }\left( z\right) \Gamma ^{2}\left( \nu \right) \left( 
\frac{z}{2}\right) ^{-2\nu }\,_{2}F_{3}\left( \left. 
\begin{array}{c}
-\nu ,\frac{1}{2}-\nu \\ 
1-\nu ,1-\nu ,1-2\nu%
\end{array}%
\right\vert z^{2}\right) \right\} .  \notag
\end{eqnarray}

It is worth noting that the numerical evaluation of the integral
representations for the $n$-th derivatives of the Bessel functions given in (%
\ref{Dn_Jnu}), (\ref{Dn_Ynu}), (\ref{Dn_Inu}), and (\ref{Dn_Knu}), is quite
efficient if we use a "double exponential"\ strategy \cite{TakahasiMori}.

\section{Reflection formulas\label{Section: Reflection formulas}}

\subsection{Bessel functions}

\begin{theorem}
$\forall t\in 
\mathbb{C}
$ and $m=0,1,\ldots $, the following reflection formula holds true:%
\begin{equation}
\left. \frac{\partial J_{\nu }\left( t\right) }{\partial \nu }\right\vert
_{\nu =m}+\left( -1\right) ^{m}\left. \frac{\partial J_{\nu }\left( t\right) 
}{\partial \nu }\right\vert _{\nu =-m}=\pi \,Y_{m}\left( t\right) .
\label{Reflection_J}
\end{equation}
\end{theorem}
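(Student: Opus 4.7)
The plan is to substitute $n=1$ in the integral representation (\ref{Dn_Jnu_a}) and evaluate at $\nu=\pm m$. From the definitions (\ref{pn(x)_def})--(\ref{qn(x)_def}) one reads off $p_{1}(x)=-x$ and $q_{1}(x)=\pi$, and the identity $\cos(\theta-\pi/2)=\sin\theta$ brings the first integral to
\[
\frac{\partial J_{\nu}(t)}{\partial \nu} = \frac{1}{\pi}\int_{0}^{\pi}x\,\sin(t\sin x-\nu x)\,dx - \frac{1}{\pi}\int_{0}^{\infty}e^{-t\sinh x-\nu x}\bigl[-x\sin\pi\nu+\pi\cos\pi\nu\bigr]\,dx.
\]
Using $\sin(\pm m\pi)=0$ and $\cos(\pm m\pi)=(-1)^{m}$, this simplifies at $\nu=\pm m$. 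I would then form the combination $A_{m}:=J'_{m}(t)+(-1)^{m}J'_{-m}(t)$ and treat its two pieces separately.

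For the exponential part, the $p_{1}\sin\pi\nu$ contributions vanish, and what survives combines into
\[
-\int_{0}^{\infty}e^{-t\sinh x}\bigl[(-1)^{m}e^{-mx}+e^{mx}\bigr]\,dx,
\]
which is precisely the second integral in the Schl\"afli-type representation (\ref{Ynu_int}) of $\pi Y_{m}(t)$. So the exponential pieces already match without any work.

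The trigonometric part yields $\frac{1}{\pi}\int_{0}^{\pi}x\bigl[\sin(t\sin x-mx)+(-1)^{m}\sin(t\sin x+mx)\bigr]dx$, and the key manoeuvre is the substitution $x\mapsto\pi-x$ in the second summand. Since $\sin(\pi-x)=\sin x$, the integrand transforms as $(-1)^{m}(\pi-u)\sin(t\sin u+m\pi-mu) = (\pi-u)\sin(t\sin u-mu)$, using $\cos m\pi=(-1)^{m}$ once more. Adding $x\sin(t\sin x-mx)+(\pi-x)\sin(t\sin x-mx)=\pi\sin(t\sin x-mx)$ then collapses the whole thing to $\int_{0}^{\pi}\sin(t\sin x-mx)\,dx$, which matches the first integral of $\pi Y_{m}(t)$ in (\ref{Ynu_int}).

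Combining the two parts gives $A_{m}=\pi Y_{m}(t)$ for $\Re t>0$, the region where (\ref{Dn_Jnu_a}) and (\ref{Ynu_int}) are valid. Since both sides of (\ref{Reflection_J}) are entire functions of $t$, the identity extends to all $t\in\mathbb{C}$ by analytic continuation. I expect the only mildly delicate step to be the substitution in the finite trigonometric integral: one has to notice that the extra factor $(-1)^{m}$ compensates exactly for $\cos m\pi$, and that the change of variable causes the weight $x$ to split symmetrically so that it can be removed in favour of the constant $\pi$. Everything else is bookkeeping.
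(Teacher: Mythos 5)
Your proof is correct, and it shares the paper's skeleton---evaluate the representation (\ref{Dn_Jnu}) at $n=1$, $\nu=\pm m$, and recognize the representation (\ref{Ynu_int}) of $Y_m$ at the end---but your treatment of the finite trigonometric integral is genuinely different and, in fact, tidier. The paper splits into the cases $m=2k$ and $m=2k+1$, shifts to $\xi=x-\pi/2$, discards terms by parity, applies product-to-sum identities, and arrives at $\int_0^\pi\sin(t\sin x+mx)\,dx$, which it must then convert using the property $Y_{-m}(t)=(-1)^mY_m(t)$. Your single reflection $x\mapsto\pi-x$ applied to the $\nu=-m$ summand handles both parities at once, because the factor $(-1)^m$ you carry cancels against $\cos m\pi$, and the weights $x$ and $\pi-x$ add up to the constant $\pi$; you land directly on $\int_0^\pi\sin(t\sin x-mx)\,dx$ with no case distinction and no appeal to the reflection property of $Y_m$. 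The exponential pieces match the second integral of (\ref{Ynu_int}) exactly as you say. The only blemish is the closing remark: neither side of (\ref{Reflection_J}) is entire in $t$, since $Y_m$ (and hence $\partial J_\nu/\partial\nu$ at integer order) has a logarithmic branch point at $t=0$; the accurate statement is that both sides are analytic on the cut plane $\left\vert \mathrm{arg}\ t\right\vert<\pi$, so the identity proved for $\Re t>0$ extends there by analytic continuation---which is the same tacit level of care the paper takes with its ``$\forall t\in\mathbb{C}$'' claim.
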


\begin{proof}
From the integral representation (\ref{Dn_Jnu}), we have%
\begin{eqnarray}
\left. \frac{\partial J_{\nu }\left( t\right) }{\partial \nu }\right\vert
_{\nu =\pm m} &=&\frac{1}{\pi }\int_{0}^{\pi }x\sin \left( t\sin x\mp
mx\right) dx  \label{DJ_m} \\
&&+\left( -1\right) ^{m+1}\int_{0}^{\infty }e^{-t\sinh x}e^{\mp mx}dx. 
\notag
\end{eqnarray}

On the one hand, consider $m=2k$, thus, according to (\ref{DJ_m}), the LHS
of (\ref{Reflection_J})\ becomes%
\begin{eqnarray}
\left. \frac{\partial J_{\nu }\left( t\right) }{\partial \nu }\right\vert
_{\nu =2k}+\left. \frac{\partial J_{\nu }\left( t\right) }{\partial \nu }%
\right\vert _{\nu =-2k} &=&\frac{2}{\pi }\int_{0}^{\pi }x\sin \left( t\sin
x\right) \cos \left( 2kx\right) dx  \label{DJ_2k} \\
&&-\int_{0}^{\infty }e^{-t\sinh x}\left( e^{2kx}+e^{-2kx}\right) dx.  \notag
\end{eqnarray}%
To calculate the first integral on the RHS of (\ref{DJ_2k}), perform the
substitution $\xi =x-\pi /2$, eliminate the term that vanishes by parity,
and undo the change of variables to obtain%
\begin{equation}
\frac{2}{\pi }\int_{0}^{\pi }x\sin \left( t\sin x\right) \cos \left(
2kx\right) dx=\int_{0}^{\pi }\sin \left( t\sin x\right) \cos \left(
2kx\right) dx.  \label{Int_J_2k}
\end{equation}%
Applying the trigonometric identity $\sin \left( \alpha +\beta \right) =\sin
\alpha \cos \beta +\sin \beta \cos \alpha $, rewrite (\ref{Int_J_2k})\ as%
\begin{eqnarray}
\frac{2}{\pi }\int_{0}^{\pi }x\sin \left( t\sin x\right) \cos \left(
2kx\right) dx &=&\int_{0}^{\pi }\sin \left( t\sin x+2kx\right) dx
\label{Int_J_2k_a} \\
&&-\int_{0}^{\pi }\cos \left( t\sin x\right) \sin \left( 2kx\right) dx. 
\notag
\end{eqnarray}%
The second integral on the RHS\ of (\ref{Int_J_2k_a})\ vanishes by parity
performing the substitution $\xi =x-\pi /2$. Therefore, (\ref{DJ_2k})\
becomes%
\begin{eqnarray}
\left. \frac{\partial J_{\nu }\left( t\right) }{\partial \nu }\right\vert
_{\nu =2k}+\left. \frac{\partial J_{\nu }\left( t\right) }{\partial \nu }%
\right\vert _{\nu =-2k} &=&\int_{0}^{\pi }\sin \left( t\sin x+2kx\right) dx
\label{Int_J_2k_b} \\
&&-\int_{0}^{\infty }e^{-t\sinh x}\left( e^{2kx}+e^{-2kx}\right) dx.  \notag
\end{eqnarray}%
According to the integral representation (\ref{Ynu_int}) and the property 
\cite[Eqn. 10.4.1]{NIST}%
\begin{equation}
Y_{-m}\left( t\right) =\left( -1\right) ^{m}Y_{m}\left( t\right) ,
\label{Ym_Y-m}
\end{equation}%
we rewrite (\ref{Int_J_2k_b})\ as (\ref{Reflection_J}) for $m=2k$. This
completes the proof for $m=2k$.

On the other hand, consider $m=2k+1$, thus, according to (\ref{DJ_m}), the
LHS\ of (\ref{Reflection_J})\ becomes%
\begin{eqnarray}
&&\left. \frac{\partial J_{\nu }\left( t\right) }{\partial \nu }\right\vert
_{\nu =2k+1}-\left. \frac{\partial J_{\nu }\left( t\right) }{\partial \nu }%
\right\vert _{\nu =-2k-1}  \label{DJ_2k+1} \\
&=&\frac{-2}{\pi }\int_{0}^{\pi }x\cos \left( t\sin x\right) \sin \left(
\left( 2k+1\right) x\right) dx  \notag \\
&&+\int_{0}^{\infty }e^{-t\sinh x}\left( e^{\left( 2k+1\right) x}-e^{-\left(
2k+1\right) x}\right) dx.  \notag
\end{eqnarray}%
Following the same steps as in (\ref{Int_J_2k})-(\ref{Int_J_2k_b}), we
calculate the first integral on the RHS of (\ref{DJ_2k+1}) as,%
\begin{equation}
\frac{-2}{\pi }\int_{0}^{\pi }x\cos \left( t\sin x\right) \sin \left( \left(
2k+1\right) x\right) dx=-\int_{0}^{\pi }\sin \left( t\sin x+\left(
2k+1\right) x\right) dx.  \label{Int_2k+1}
\end{equation}%
Therefore, inserting (\ref{Int_2k+1})\ in (\ref{DJ_2k+1})\ and taking into
account the integral representation (\ref{Ynu_int}) and the property (\ref%
{Ym_Y-m}), we rewrite (\ref{Int_2k+1})\ as%
\begin{equation*}
\left. \frac{\partial J_{\nu }\left( t\right) }{\partial \nu }\right\vert
_{\nu =2k+1}-\left. \frac{\partial J_{\nu }\left( t\right) }{\partial \nu }%
\right\vert _{\nu =-2k-1}=\pi \,Y_{2k+1}\left( t\right) ,
\end{equation*}%
which completes the proof.
\end{proof}

\begin{corollary}
We can extend the formula given in (\ref{DJ_integral})\ to negative integral
orders with the aid of the reflection formula (\ref{Reflection_J}),
resulting in%
\begin{equation}
\left. \frac{\partial J_{\nu }\left( t\right) }{\partial \nu }\right\vert
_{\nu =\pm m}=\left( \pm 1\right) ^{m}\left[ \frac{\pi }{2}Y_{m}\left(
t\right) \pm \frac{m!}{2}\sum_{k=0}^{m-1}\frac{J_{k}\left( t\right) }{%
k!\left( m-k\right) }\left( \frac{t}{2}\right) ^{k-m}\right] .
\label{Extension_J}
\end{equation}
\end{corollary}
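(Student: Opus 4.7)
The plan is to observe that the $+m$ case of (\ref{Extension_J}) is nothing but the known identity (\ref{DJ_integral}), so no work is required there; the content of the corollary lies entirely in the $-m$ case, which I will extract algebraically from the reflection formula (\ref{Reflection_J}) together with (\ref{DJ_integral}).

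First I would solve (\ref{Reflection_J}) for the negative-order derivative, writing
\begin{equation*}
\left.\frac{\partial J_{\nu}(t)}{\partial \nu}\right|_{\nu=-m}
=(-1)^{m}\left[\pi\,Y_{m}(t)-\left.\frac{\partial J_{\nu}(t)}{\partial \nu}\right|_{\nu=m}\right],
\end{equation*}
where I have used $(-1)^{-m}=(-1)^{m}$ to clear the sign on the left-hand side. Then I would substitute the closed-form expression (\ref{DJ_integral}) for the derivative at $\nu=m$ into the bracket, so that the $\pi Y_{m}(t)$ term becomes $\pi Y_{m}(t)-\tfrac{\pi}{2}Y_{m}(t)=\tfrac{\pi}{2}Y_{m}(t)$, while the finite sum picks up an overall minus sign.

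Collecting, one obtains
\begin{equation*}
\left.\frac{\partial J_{\nu}(t)}{\partial \nu}\right|_{\nu=-m}
=(-1)^{m}\left[\frac{\pi}{2}Y_{m}(t)-\frac{m!}{2}\sum_{k=0}^{m-1}\frac{J_{k}(t)}{k!(m-k)}\left(\frac{t}{2}\right)^{k-m}\right],
\end{equation*}
which is exactly the lower-sign instance of (\ref{Extension_J}). Combining this with (\ref{DJ_integral}) under the common $\pm$ notation yields the stated unified formula.

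There is essentially no obstacle in this argument; it is a one-line consequence of the theorem once one is careful with the sign $(-1)^{m}$ coming from the reflection formula and the fact that $(\pm 1)^m$ on the outside absorbs it. The only thing worth double-checking is that the two signs $\pm$ in front of the bracket and in front of the finite sum track consistently, which is precisely what the above substitution confirms.
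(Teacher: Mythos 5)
Your argument is correct and is exactly the intended derivation: the paper states the corollary as an immediate algebraic consequence of the reflection formula (\ref{Reflection_J}) combined with the known expansion (\ref{DJ_integral}), which is precisely what you carry out, with the signs $(-1)^{-m}=(-1)^{m}$ handled correctly. Nothing further is needed.
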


\begin{theorem}
$\forall t\in 
\mathbb{C}
$ and $m=0,1,\ldots $, the following reflection formula holds true:%
\begin{equation}
\left. \frac{\partial ^{2}J_{\nu }\left( t\right) }{\partial \nu ^{2}}%
\right\vert _{\nu =m}+\left( -1\right) ^{m+1}\left. \frac{\partial
^{2}J_{\nu }\left( t\right) }{\partial \nu ^{2}}\right\vert _{\nu =-m}=2\pi
\left. \frac{\partial Y_{\nu }\left( t\right) }{\partial \nu }\right\vert
_{\nu =m}+\pi ^{2}J_{m}\left( t\right) .  \label{Reflection_J2}
\end{equation}
\end{theorem}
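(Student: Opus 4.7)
The plan is to bypass the integral-representation machinery altogether and exploit the connection identity
\[
J_{-\nu}(t)=J_{\nu}(t)\cos\nu\pi - Y_{\nu}(t)\sin\nu\pi,
\]
which is an \emph{analytic} identity in $\nu$ (with $t$ fixed) and therefore may be differentiated with respect to $\nu$ as many times as needed. The order-one analogue of the procedure below recovers Theorem (\ref{Reflection_J}), which provides a clean sanity check for the signs.

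First I would differentiate the identity twice in $\nu$. Writing $\phi(\mu):=J_{\mu}(t)$ and $\psi(\mu):=Y_{\mu}(t)$, the only mildly delicate point is the chain rule applied to $\phi(-\nu)$, which gives $\tfrac{d}{d\nu}\phi(-\nu)=-\phi'(-\nu)$ and $\tfrac{d^{2}}{d\nu^{2}}\phi(-\nu)=\phi''(-\nu)$. Applying the Leibniz rule to $\phi(\nu)\cos\nu\pi$ and to $\psi(\nu)\sin\nu\pi$ then yields
\[
\phi''(\nu)\cos\nu\pi-2\pi\phi'(\nu)\sin\nu\pi-\pi^{2}\phi(\nu)\cos\nu\pi-\psi''(\nu)\sin\nu\pi-2\pi\psi'(\nu)\cos\nu\pi+\pi^{2}\psi(\nu)\sin\nu\pi=\phi''(-\nu).
\]

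Next I would specialize to $\nu=m$, where $\sin m\pi=0$ and $\cos m\pi=(-1)^{m}$. Every term carrying $\sin m\pi$ is killed, leaving
\[
(-1)^{m}\phi''(m)-(-1)^{m}\pi^{2}\phi(m)-2\pi(-1)^{m}\psi'(m)=\phi''(-m).
\]
Multiplying through by $(-1)^{m}$ and reorganizing produces exactly
\[
\phi''(m)+(-1)^{m+1}\phi''(-m)=2\pi\,\psi'(m)+\pi^{2}J_{m}(t),
\]
which is the claimed reflection formula.

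The main obstacle is genuinely only bookkeeping: tracking the alternating signs produced by the chain rule on $J_{-\nu}$ together with those from differentiating $\sin\nu\pi$ and $\cos\nu\pi$ twice. No new analytic input is needed, and validity $\forall t\in\mathbb{C}$ (with the usual branch understanding for $Y_{\nu}$ at $t=0$) is inherited directly from the connection identity, so no separate analytic continuation step is required. A parallel integral-representation proof could be carried out by inserting (\ref{Dn_Jnu}) with $n=2$ and expanding $(i\pi-x)^{2}=x^{2}-\pi^{2}-2i\pi x$ via (\ref{f3(n)}), but the requisite parity substitutions $\xi=x-\pi/2$ and the recognition of $2\pi\,\partial_{\nu}Y_{\nu}|_{\nu=m}+\pi^{2}J_{m}(t)$ via (\ref{Dn_Ynu}) and (\ref{Jnu_Int}) are markedly heavier than the connection-formula argument above.
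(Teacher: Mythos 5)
Your proof is correct, and I checked the bookkeeping: twice differentiating $J_{-\nu}(t)=J_{\nu}(t)\cos\nu\pi-Y_{\nu}(t)\sin\nu\pi$, evaluating at $\nu=m$ (where every $\sin m\pi$ term dies and $\cos m\pi=(-1)^{m}$), and multiplying by $(-1)^{m}$ gives exactly (\ref{Reflection_J2}); your first-order sanity check does recover (\ref{Reflection_J}). However, this is a genuinely different route from the paper. The paper stays entirely inside the integral-representation framework of Section \ref{Section: Derivatives}: it inserts (\ref{Dn_Jnu}) with $n=2$, splits into the cases $m=2k$ and $m=2k+1$, uses the substitution $\xi=x-\pi/2$ with parity cancellations and product-to-sum trigonometric identities, and then identifies the surviving integrals as $2\pi\,\partial_{\nu}Y_{\nu}|_{\nu=m}+\pi^{2}J_{m}(t)$ through (\ref{Dn_Ynu}) with $n=1$ and the Schl\"afli representation (\ref{Jnu_Int}). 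Your argument instead rests on the connection formula being an identity between functions entire in $\nu$ for fixed $t\neq0$ (so it may be differentiated and then evaluated at the integers, where $Y_{m}$ is defined by the usual limit); granting that standard fact, it is shorter, avoids all case analysis, and generalizes at once: $n$-fold Leibniz differentiation yields an all-orders reflection formula for $J_{\nu}$ (and, via $Y_{-\nu}=Y_{\nu}\cos\nu\pi+J_{\nu}\sin\nu\pi$, for $Y_{\nu}$) in the spirit of the paper's (\ref{Reflection_I_general}) for $I_{\nu}$. What the paper's heavier computation buys is self-containedness and a working demonstration of its new integral representations, which is part of the article's stated purpose; the only caveat your proof shares with the paper is that ``$\forall t\in\mathbb{C}$'' should really exclude $t=0$ and be read on the principal branch.
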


\begin{proof}
From the integral representation (\ref{Dn_Jnu}) for $n=2$, we have 
\begin{eqnarray*}
\left. \frac{\partial ^{2}J_{\nu }\left( t\right) }{\partial \nu ^{2}}%
\right\vert _{\nu =\pm m} &=&\frac{-1}{\pi }\int_{0}^{\pi }x^{2}\cos \left(
t\sin x\mp mx\right) dx \\
&&+2\left( -1\right) ^{m}\int_{0}^{\infty }e^{-t\sinh x}x\,e^{\mp mx}dx.
\end{eqnarray*}%
Therefore, 
\begin{eqnarray}
&&\left. \frac{\partial ^{2}J_{\nu }\left( t\right) }{\partial \nu ^{2}}%
\right\vert _{\nu =m}+\left( -1\right) ^{m+1}\left. \frac{\partial
^{2}J_{\nu }\left( t\right) }{\partial \nu ^{2}}\right\vert _{\nu =-m}
\label{D2J_a} \\
&=&\frac{-1}{\pi }\int_{0}^{\pi }x^{2}\left[ \cos \left( t\sin x-mx\right)
+\left( -1\right) ^{m+1}\cos \left( t\sin x+mx\right) \right] dx  \notag \\
&&+2\int_{0}^{\infty }e^{-t\sinh x}x\left[ \left( -1\right)
^{m}e^{-mx}-e^{mx}\right] dx.  \notag
\end{eqnarray}%
On the one hand, calculate the first integral on the RHS of (\ref{D2J_a})
for $m=2k$,%
\begin{eqnarray*}
&&\frac{1}{\pi }\int_{0}^{\pi }x^{2}\left[ \cos \left( t\sin x+2kx\right)
-\cos \left( t\sin x-2kx\right) \right] dx \\
&=&\frac{-2}{\pi }\int_{0}^{\pi }x^{2}\sin \left( t\sin x\right) \sin 2kx\
dx.
\end{eqnarray*}%
Performing the change of variables $\xi =x-\pi /2$ and cancelling the
corresponding terms by parity, we arrive at%
\begin{equation}
2\left( -1\right) ^{k}\int_{-\pi /2}^{\pi /2}\xi \sin \left( t\cos \xi
\right) \sin \left( -2k\xi \right) d\xi ,  \label{D2J_b}
\end{equation}%
Applying the trigonometric identity $\cos \left( \alpha +\beta \right) =\cos
\alpha \cos \beta -\sin \alpha \sin \beta $, eliminating one of the
resulting integrals by parity, and undoing the substitution performed, we
rewrite (\ref{D2J_b})\ as 
\begin{eqnarray}
&&-2\int_{0}^{\pi }\left( x-\frac{\pi }{2}\right) \cos \left( t\sin
x-2kx\right) dx  \label{D2J_c} \\
&=&-2\int_{0}^{\pi }x\cos \left( t\sin x-2kx\right) dx+\pi
^{2}\,J_{2k}\left( t\right) ,  \notag
\end{eqnarray}%
where we have applied the integral representation (\ref{Jnu_Int}).

On the other hand, calculate the first integral on the RHS of (\ref{D2J_a})
for $m=2k+1$,%
\begin{eqnarray*}
&&\frac{-1}{\pi }\int_{0}^{\pi }x^{2}\left[ \cos \left( t\sin x-\left(
2k+1\right) x\right) +\cos \left( t\sin x+\left( 2k+1\right) x\right) \right]
dx \\
&=&\frac{-2}{\pi }\int_{0}^{\pi }x^{2}\cos \left( t\sin x\right) \cos \left(
\left( 2k+1\right) x\right) dx.
\end{eqnarray*}%
Following the same steps as in (\ref{D2J_b})-(\ref{D2J_c}), but applying the
trigonometric identity $\sin \left( \alpha +\beta \right) =\sin \alpha \cos
\beta +\sin \beta \cos \alpha $, we rewrite the first integral on the LHS of
(\ref{D2J_a})\ as%
\begin{equation}
-2\int_{0}^{\pi }x\cos \left( t\sin x-\left( 2k+1\right) x\right) dx+\pi
^{2}J_{2k+1}\left( t\right) .  \label{D2J_d}
\end{equation}%
From (\ref{D2J_c})\ and (\ref{D2J_d}), Eqn. (\ref{D2J_a})\ becomes 
\begin{eqnarray}
&&\left. \frac{\partial ^{2}J_{\nu }\left( t\right) }{\partial \nu ^{2}}%
\right\vert _{\nu =m}+\left( -1\right) ^{m+1}\left. \frac{\partial
^{2}J_{\nu }\left( t\right) }{\partial \nu ^{2}}\right\vert _{\nu =-m}
\label{D2J_final} \\
&=&-2\int_{0}^{\pi }x\cos \left( t\sin x-mx\right) dx+\pi ^{2}J_{m}\left(
t\right)  \notag \\
&&+2\int_{0}^{\infty }e^{-t\sinh x}x\left[ \left( -1\right)
^{m}e^{-mx}-e^{mx}\right] dx.  \notag
\end{eqnarray}%
Finally, take into account the integral representation (\ref{Dn_Ynu})\ for $%
n=1$, to express (\ref{D2J_final})\ as (\ref{Reflection_J2}), as we wanted
to prove.
\end{proof}

\begin{corollary}
Taking into account (\ref{DY_integral}), Eqn. (\ref{Reflection_J2})\ becomes 
\begin{equation}
\left. \frac{\partial ^{2}J_{\nu }\left( t\right) }{\partial \nu ^{2}}%
\right\vert _{\nu =m}+\left( -1\right) ^{m+1}\left. \frac{\partial
^{2}J_{\nu }\left( t\right) }{\partial \nu ^{2}}\right\vert _{\nu =-m}=\pi
\,m!\sum_{k=0}^{m-1}\frac{Y_{k}\left( t\right) }{k!\left( m-k\right) }\left( 
\frac{t}{2}\right) ^{k-m}.  \label{Reflection_J2_closed-form}
\end{equation}
\end{corollary}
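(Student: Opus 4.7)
The plan is a purely algebraic manipulation: I would substitute the closed-form expression (\ref{DY_integral}) for $\left.\partial Y_\nu(t)/\partial\nu\right|_{\nu=m}$ into the reflection formula (\ref{Reflection_J2}) established in the preceding theorem, and observe that the free $J_m(t)$ terms cancel.

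Concretely, I would multiply (\ref{DY_integral}) through by $2\pi$, which produces
\[
2\pi \left.\frac{\partial Y_\nu(t)}{\partial\nu}\right\vert_{\nu=m} = -\pi^2 J_m(t) + \pi\,m!\sum_{k=0}^{m-1}\frac{Y_k(t)}{k!(m-k)}\left(\frac{t}{2}\right)^{k-m},
\]
then add $\pi^2 J_m(t)$ to both sides; the isolated $J_m(t)$ contributions cancel exactly, yielding
\[
2\pi \left.\frac{\partial Y_\nu(t)}{\partial\nu}\right\vert_{\nu=m} + \pi^2 J_m(t) = \pi\,m!\sum_{k=0}^{m-1}\frac{Y_k(t)}{k!(m-k)}\left(\frac{t}{2}\right)^{k-m}.
\]
The left-hand side here is precisely the right-hand side of (\ref{Reflection_J2}), so substituting back delivers the claimed closed-form expression for the reflection sum.

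There is no genuine technical obstacle in this derivation; the two points requiring a bit of care are the sign factor $(-1)^{m+1}$ on the left-hand side, which is preserved verbatim under the substitution, and the range of validity: since (\ref{DY_integral}) is stated only for nonnegative integer order $m$, the resulting identity is likewise restricted to $m=0,1,\ldots$, which matches the hypothesis inherited from the preceding theorem. One might optionally double-check the degenerate case $m=0$, where the empty sum on the right is zero and (\ref{Reflection_J2}) reduces correctly to $2\left.\partial^{2}J_\nu(t)/\partial\nu^{2}\right|_{\nu=0}=2\pi\left.\partial Y_\nu(t)/\partial\nu\right|_{\nu=0}+\pi^{2}J_0(t)$, consistent with $\left.\partial Y_\nu/\partial\nu\right|_{\nu=0}=-\tfrac{\pi}{2}J_0(t)$ from (\ref{DY_integral}).
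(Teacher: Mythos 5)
Your proposal is correct and coincides with the paper's own (implicit) argument: the corollary is obtained exactly by substituting the closed form (\ref{DY_integral}) into the right-hand side of (\ref{Reflection_J2}) and noting that the $\pi^{2}J_{m}(t)$ terms cancel. Your sanity check at $m=0$ is a harmless extra confirmation and does not change the argument.
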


\begin{theorem}
$\forall t\in 
\mathbb{C}
$ and $m=0,1,\ldots $, the following reflection formula holds true:%
\begin{equation}
\left. \frac{\partial Y_{\nu }\left( t\right) }{\partial \nu }\right\vert
_{\nu =m}+\left( -1\right) ^{m}\left. \frac{\partial Y_{\nu }\left( t\right) 
}{\partial \nu }\right\vert _{\nu =-m}=-\pi \,J_{m}\left( t\right) .
\label{Reflection_Y}
\end{equation}
\end{theorem}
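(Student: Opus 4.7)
The plan is to mimic the strategy used in the proof of (\ref{Reflection_J}), now starting from the integral representation (\ref{Dn_Ynu}) with $n=1$. Noting that $p_1(x)=-x$ and $q_1(x)=\pi$, at $\nu=\pm m$ where $\cos\pi m=(-1)^m$ and $\sin\pi m=0$, that representation simplifies to
\[
\left.\frac{\partial Y_\nu(t)}{\partial\nu}\right|_{\nu=\pm m}
=-\frac{1}{\pi}\int_0^\pi x\cos\left(t\sin x\mp mx\right)dx
-\frac{1}{\pi}\int_0^\infty e^{-t\sinh x}\,x\left[e^{\pm mx}-(-1)^m e^{\mp mx}\right]dx,
\]
after using $\sin(\theta-\pi/2)=-\cos\theta$. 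I would then form the reflection combination appearing on the LHS of (\ref{Reflection_Y}).

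The exponential integrals cancel immediately, since the bracketed sum
\[
x\left[e^{mx}-(-1)^m e^{-mx}\right]+(-1)^m x\left[e^{-mx}-(-1)^m e^{mx}\right]=0
\]
vanishes identically, so the infinite-range contribution is killed by the reflection, just as in the analogous cancellations of the earlier proofs.

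For the finite integrals over $[0,\pi]$, I would split according to the parity of $m$. In the even case $m=2k$, the combination produces $\cos(t\sin x-2kx)+\cos(t\sin x+2kx)=2\cos(t\sin x)\cos(2kx)$; the substitution $\xi=x-\pi/2$ together with the parity argument employed in (\ref{Int_J_2k})--(\ref{Int_J_2k_a}) absorbs the factor $x$, and the symmetry $x\mapsto\pi-x$ then recombines the two terms to leave $-\int_0^\pi\cos(t\sin x-2kx)\,dx$. The odd case $m=2k+1$ is analogous: using $\cos(A-B)-\cos(A+B)=2\sin A\sin B$, the same substitution together with $x\mapsto\pi-x$ reduces the finite integral to $-\int_0^\pi\cos(t\sin x-(2k+1)x)\,dx$. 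In both parities, $\sin m\pi=0$ lets the Schl\"{a}fli representation (\ref{Jnu_Int}) identify the remaining integral with $\pi J_m(t)$, yielding precisely the right-hand side $-\pi J_m(t)$ of (\ref{Reflection_Y}).

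The main obstacle is nothing more than careful bookkeeping of the $(-1)^k$ factors produced by the half-period shift and of the $(-1)^m$ factor coming from the reflection itself; no new analytic input is required beyond the trigonometric manipulations already displayed in the proof of (\ref{Reflection_J}).
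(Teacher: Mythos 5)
Your proposal is correct and follows essentially the same route as the paper: the $n=1$ case of the integral representation of $\partial Y_{\nu}/\partial\nu$, cancellation of the infinite-range integrals under the reflection combination, the half-period shift $\xi=x-\pi/2$ with parity to absorb the factor $x$, and identification of the remaining integral through the Schl\"{a}fli-type representation. The only cosmetic difference is that you use the symmetry $x\mapsto\pi-x$ to land directly on $-\int_{0}^{\pi}\cos\left(t\sin x-mx\right)dx=-\pi J_{m}\left(t\right)$, whereas the paper arrives at the integral of $\cos\left(t\sin x+mx\right)$ and then invokes $J_{-m}\left(t\right)=\left(-1\right)^{m}J_{m}\left(t\right)$.
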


\begin{proof}
From the integral representation (\ref{Dn_Ynu}), we have%
\begin{eqnarray*}
\left. \frac{\partial Y_{\nu }\left( t\right) }{\partial \nu }\right\vert
_{\nu =\pm m} &=&\frac{-1}{\pi }\int_{0}^{\pi }x\cos \left( t\sin x\mp
mx\right) dx \\
&&-\frac{1}{\pi }\int_{0}^{\infty }x\,e^{-t\sinh x}\left( e^{\pm mx}+\left(
-1\right) ^{m+1}e^{\mp mx}\right) dx.
\end{eqnarray*}%
Therefore,%
\begin{eqnarray}
&&\left. \frac{\partial Y_{\nu }\left( t\right) }{\partial \nu }\right\vert
_{\nu =m}+\left( -1\right) ^{m}\left. \frac{\partial Y_{\nu }\left( t\right) 
}{\partial \nu }\right\vert _{\nu =-m}  \label{DYm} \\
&=&\frac{-1}{\pi }\int_{0}^{\pi }x\left[ \cos \left( t\sin x-mx\right)
+\left( -1\right) ^{m}\cos \left( t\sin x+mx\right) \right] dx.  \notag
\end{eqnarray}

On the one hand, consider $m=2k$, thus (\ref{DYm})\ becomes%
\begin{eqnarray}
&&\left. \frac{\partial Y_{\nu }\left( t\right) }{\partial \nu }\right\vert
_{\nu =2k}+\left. \frac{\partial Y_{\nu }\left( t\right) }{\partial \nu }%
\right\vert _{\nu =-2k}  \label{DY_2k_0} \\
&=&\frac{-2}{\pi }\int_{0}^{\pi }x\cos \left( t\sin x\right) \cos \left(
2kx\right) dx.  \notag
\end{eqnarray}%
Applying the same steps as in (\ref{Int_J_2k})-(\ref{Int_J_2k_b}), but
considering the trigonometric identity $\cos \left( \alpha +\beta \right)
=\cos \alpha \cos \beta -\sin \alpha \sin \beta $, we arrive at%
\begin{eqnarray}
&&\left. \frac{\partial Y_{\nu }\left( t\right) }{\partial \nu }\right\vert
_{\nu =2k}+\left. \frac{\partial Y_{\nu }\left( t\right) }{\partial \nu }%
\right\vert _{\nu =-2k}  \label{DY_2k} \\
&=&-\int_{0}^{\pi }\cos \left( t\sin x+2kx\right) dx.  \notag
\end{eqnarray}%
According to the integral representation (\ref{Jnu_Int})\ and the property 
\cite[Eqn. 10.4.1]{NIST}%
\begin{equation}
J_{-m}\left( t\right) =\left( -1\right) ^{m}J_{m}\left( t\right) ,
\label{Jm_J-m}
\end{equation}%
we rewrite (\ref{DY_2k}) as (\ref{Reflection_Y})\ for $m=2k$. This completes
the proof for $m=2k$.

On the other hand, consider $m=2k+1$, thus (\ref{DYm})\ becomes%
\begin{eqnarray*}
&&\left. \frac{\partial Y_{\nu }\left( t\right) }{\partial \nu }\right\vert
_{\nu =2k+1}-\left. \frac{\partial Y_{\nu }\left( t\right) }{\partial \nu }%
\right\vert _{\nu =-2k-1} \\
&=&\frac{-2}{\pi }\int_{0}^{\pi }x\sin \left( t\sin x\right) \sin \left(
\left( 2k+1\right) x\right) dx.
\end{eqnarray*}%
Similar to the derivation given in (\ref{DY_2k_0})-(\ref{DY_2k}), we
calculate the above integral as 
\begin{eqnarray*}
&&\left. \frac{\partial Y_{\nu }\left( t\right) }{\partial \nu }\right\vert
_{\nu =2k+1}-\left. \frac{\partial Y_{\nu }\left( t\right) }{\partial \nu }%
\right\vert _{\nu =-2k-1} \\
&=&\int_{0}^{\pi }\cos \left( t\sin x+\left( 2k+1\right) x\right) dx,
\end{eqnarray*}%
thereby, according to the integral representation (\ref{Jnu_Int})\ and the
property (\ref{Jm_J-m}), we finally arrive at (\ref{Reflection_Y}) for $%
m=2k+1$. This completes the proof.
\end{proof}

\begin{corollary}
We extend the formula given in (\ref{DY_integral})\ to negative integral
orders with the aid of the reflection formula (\ref{Reflection_Y}),
resulting in%
\begin{equation}
\left. \frac{\partial Y_{\nu }\left( t\right) }{\partial \nu }\right\vert
_{\nu =\pm m}=\left( \pm 1\right) ^{m}\left[ -\frac{\pi }{2}J_{m}\left(
t\right) \pm \frac{m!}{2}\sum_{k=0}^{m-1}\frac{Y_{k}\left( t\right) }{%
k!\left( m-k\right) }\left( \frac{t}{2}\right) ^{k-m}\right] .
\label{Extenxion_Y}
\end{equation}
\end{corollary}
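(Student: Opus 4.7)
The plan is to derive (\ref{Extenxion_Y}) by handling the two sign choices separately, using the reflection formula (\ref{Reflection_Y}) already established together with (\ref{DY_integral}). The upper sign case ($\nu=+m$) is immediate: evaluating the right-hand side of (\ref{Extenxion_Y}) with the plus signs chosen reproduces (\ref{DY_integral}) verbatim, so nothing new needs to be proved for that half of the statement.

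For the lower sign case ($\nu=-m$), I would solve the reflection identity (\ref{Reflection_Y}) for the derivative at $\nu=-m$. Multiplying through by $(-1)^m$ and using $(-1)^{-m}=(-1)^m$, one obtains
\[
\left.\frac{\partial Y_{\nu}(t)}{\partial \nu}\right|_{\nu=-m} = (-1)^{m}\left[-\pi\,J_{m}(t) - \left.\frac{\partial Y_{\nu}(t)}{\partial \nu}\right|_{\nu=m}\right].
\]
Substituting (\ref{DY_integral}) into the bracketed expression, the two $J_{m}(t)$ contributions $-\pi J_{m}(t)$ and $+\frac{\pi}{2}J_{m}(t)$ combine to $-\frac{\pi}{2}J_{m}(t)$, while the finite sum acquires an overall minus sign from the subtraction. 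Recognising $(-1)^m$ as $(\pm 1)^m$ with the lower sign chosen, this is precisely (\ref{Extenxion_Y}) in the $\nu=-m$ case, and the two halves together establish the corollary.

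There is no genuine obstacle here: the argument is a one-step algebraic rearrangement of two previously established identities, and no new analytic content is introduced. The only point deserving care is the sign bookkeeping—specifically, verifying that the $(-1)^m$ prefactor emerging from inverting (\ref{Reflection_Y}) matches the $(\pm 1)^m$ factor used in (\ref{Extenxion_Y}), and that the sign in front of the finite sum flips precisely as claimed when passing from $\nu=+m$ to $\nu=-m$.
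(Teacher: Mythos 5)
Your proposal is correct and matches the paper's intent: the corollary is stated there without a separate proof precisely because it follows, as you show, by solving the reflection formula (\ref{Reflection_Y}) for the derivative at $\nu=-m$ and inserting (\ref{DY_integral}), the $\nu=+m$ case being (\ref{DY_integral}) itself. Your sign bookkeeping, including the combination $-\pi J_m(t)+\tfrac{\pi}{2}J_m(t)=-\tfrac{\pi}{2}J_m(t)$ and the use of $(-1)^{-m}=(-1)^m$, is accurate.
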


\begin{theorem}
$\forall t\in 
\mathbb{C}
$ and $m=0,1,\ldots $, the following reflection formula holds true:%
\begin{equation}
\left. \frac{\partial ^{2}Y_{\nu }\left( t\right) }{\partial \nu ^{2}}%
\right\vert _{\nu =m}+\left( -1\right) ^{m+1}\left. \frac{\partial
^{2}Y_{\nu }\left( t\right) }{\partial \nu ^{2}}\right\vert _{\nu =-m}=-2\pi
\left. \frac{\partial J_{\nu }\left( t\right) }{\partial \nu }\right\vert
_{\nu =m}+\pi ^{2}Y_{m}\left( t\right) .  \label{Reflection_Y2}
\end{equation}
\end{theorem}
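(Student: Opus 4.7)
The plan is to mirror the proof of Theorem \ref{Reflection_J2}, with the roles of $J_{\nu}$ and $Y_{\nu}$ essentially interchanged. First I would specialize (\ref{Dn_Ynu}) to $n=2$. Since $p_{2}(x)=x^{2}-\pi^{2}$ and $q_{2}(x)=-2\pi x$, and at integer order $\cos \pi(\pm m)=(-1)^{m}$, $\sin \pi(\pm m)=0$, together with $\sin(\theta-\pi)=-\sin\theta$, this yields
\begin{align*}
\left.\frac{\partial^{2}Y_{\nu}(t)}{\partial \nu^{2}}\right|_{\nu=\pm m}
&=-\frac{1}{\pi}\int_{0}^{\pi}x^{2}\sin(t\sin x\mp mx)\,dx\\
&\quad -\frac{1}{\pi}\int_{0}^{\infty}e^{-t\sinh x}\bigl[x^{2}e^{\pm mx}+(-1)^{m}(x^{2}-\pi^{2})e^{\mp mx}\bigr]\,dx.
\end{align*}
Forming the alternating combination on the LHS of (\ref{Reflection_Y2}), the $(0,\infty)$ integrand collapses remarkably: the $x^{2}$ contributions cancel, and only the constants survive, leaving $-\pi\int_{0}^{\infty}e^{-t\sinh x}\bigl[e^{mx}+(-1)^{m+1}e^{-mx}\bigr]\,dx$.

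For the $(0,\pi)$ part I would split on the parity of $m$ and apply a sum-to-product identity, exactly as in the proof of Theorem \ref{Reflection_J2}. This gives $\frac{2}{\pi}\int_{0}^{\pi}x^{2}\cos(t\sin x)\sin(2kx)\,dx$ when $m=2k$, and $-\frac{2}{\pi}\int_{0}^{\pi}x^{2}\sin(t\sin x)\cos((2k+1)x)\,dx$ when $m=2k+1$. In each case, the shift $\xi=x-\pi/2$ combined with the parity of the $t\cos\xi$ factor (even in $\xi$) against the odd trigonometric factor kills the $\xi^{2}$ and the $\pi^{2}/4$ pieces of $(\xi+\pi/2)^{2}=\xi^{2}+\pi\xi+\pi^{2}/4$, leaving only the linear $\pi\xi$ contribution. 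A further addition formula for $\cos$ or $\sin$, a second parity cancellation, and undoing the shift then reduce the $(0,\pi)$ part to a linear combination of $\int_{0}^{\pi}x\sin(t\sin x+mx)\,dx$ and $\int_{0}^{\pi}\sin(t\sin x+mx)\,dx$.

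The final step is to rewrite these using $x\mapsto\pi-x$, which for integer $m$ turns $\sin(t\sin x+mx)$ into $\pm\sin(t\sin x-mx)$ and so produces the ``natural'' phase appearing in the Schl\"{a}fli representations. The $x$-weighted piece, together with the appropriate slice of the $(0,\infty)$ remainder, matches $-2\pi\,\partial_{\nu}J_{\nu}(t)|_{\nu=m}$ via (\ref{DJ_m}), while the unweighted piece together with the residual $\pi^{2}$-exponential tail matches $\pi^{2}Y_{m}(t)$ via (\ref{Ynu_int}); adding the two yields (\ref{Reflection_Y2}). The main obstacle is the sign-bookkeeping across the two parity cases of $m$ and across the $x\mapsto\pi-x$ reflection, where the $(-1)^{k}$ factors coming from the $\xi$-shift and from $\sin/\cos$ shifts by $k\pi$ and $(2k+1)\pi/2$ must conspire to produce a single formula valid for all $m$. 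A convenient sanity check is $m=0$: the LHS vanishes identically and, using $\partial_{\nu}J_{\nu}(t)|_{\nu=0}=\tfrac{\pi}{2}Y_{0}(t)$ from (\ref{DJ_integral}), the RHS becomes $-\pi^{2}Y_{0}(t)+\pi^{2}Y_{0}(t)=0$, as required.
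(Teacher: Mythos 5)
Your proposal is correct and follows essentially the same route as the paper: specialize (\ref{Dn_Ynu}) to $n=2$, note that the $x^{2}$ contributions cancel in the exponential tail of the combination, reduce the $(0,\pi)$ integral via the parity split in $m$, the shift $\xi=x-\pi/2$ and an addition formula (your intermediate expressions $\frac{2}{\pi}\int_{0}^{\pi}x^{2}\cos(t\sin x)\sin(2kx)\,dx$ and $-\frac{2}{\pi}\int_{0}^{\pi}x^{2}\sin(t\sin x)\cos((2k+1)x)\,dx$ are the same ones the paper uses), and then identify the surviving $x$-weighted and unweighted pieces with $-2\pi\,\partial J_{\nu}(t)/\partial\nu|_{\nu=m}$ and $\pi^{2}Y_{m}(t)$ through (\ref{Dn_Jnu}) with $n=1$ and (\ref{Ynu_int}). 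The only cosmetic deviation is that you arrive at the phase $t\sin x+mx$ and convert by $x\mapsto\pi-x$ (which for the $x$-weighted integral also generates an extra unweighted term, harmless since both types appear in the final combination), whereas the paper obtains the $t\sin x-mx$ phase directly.
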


\begin{proof}
From the integral representation (\ref{Dn_Ynu})\ for $n=2$, we have%
\begin{eqnarray*}
&&\left. \frac{\partial ^{2}Y_{\nu }\left( t\right) }{\partial \nu ^{2}}%
\right\vert _{\nu =\pm m} \\
&=&\frac{-1}{\pi }\int_{0}^{\pi }x^{2}\sin \left( t\sin x\mp mx\right) dx \\
&&-\frac{1}{\pi }\int_{0}^{\infty }e^{-t\sinh x}\left[ x^{2}\left( e^{\pm
mx}+\left( -1\right) ^{m}e^{\mp mx}\right) +\left( -1\right) ^{m+1}\pi
^{2}e^{\mp mx}\right] dx,
\end{eqnarray*}%
hence%
\begin{eqnarray}
&&\left. \frac{\partial ^{2}Y_{\nu }\left( t\right) }{\partial \nu ^{2}}%
\right\vert _{\nu =m}+\left( -1\right) ^{m+1}\left. \frac{\partial
^{2}Y_{\nu }\left( t\right) }{\partial \nu ^{2}}\right\vert _{\nu =-m}
\label{D2Y_a} \\
&=&\frac{-1}{\pi }\int_{0}^{\pi }x^{2}\left[ \sin \left( t\sin x-mx\right)
+\left( -1\right) ^{m+1}\sin \left( t\sin x+mx\right) \right] dx  \notag \\
&&-\pi \int_{0}^{\infty }e^{-t\sinh x}\left[ \left( -1\right)
^{m+1}e^{-mx}+e^{mx}\right] dx.  \notag
\end{eqnarray}%
Let us calculate the first integral on the RHS of (\ref{D2Y_a}). First,
consider $m=2k$, thus%
\begin{eqnarray*}
&&\frac{-1}{\pi }\int_{0}^{\pi }x^{2}\left[ \sin \left( t\sin x-2kx\right)
-\sin \left( t\sin x+2kx\right) \right] dx \\
&=&\frac{2}{\pi }\int_{0}^{\pi }x^{2}\cos \left( t\sin x\right) \sin 2kx\,dx.
\end{eqnarray*}%
Performing the substitution $\xi =x-\pi /2$ and cancelling the corresponding
terms by parity, we have%
\begin{equation}
-2\left( -1\right) ^{k}\int_{-\pi /2}^{\pi /2}\xi \cos \left( t\cos \xi
\right) \sin \left( -2k\xi \right) d\xi .  \label{D2Y_b}
\end{equation}%
Rewrite (\ref{D2Y_b})\ with the trigonometric identity $\sin \left( \alpha
+\beta \right) =\sin \alpha \cos \beta +\sin \beta \cos \alpha $, eliminate
one of the resulting integrals by parity, and undo the substitution
performed, arriving at%
\begin{eqnarray}
&&\frac{-1}{\pi }\int_{0}^{\pi }x^{2}\left[ \sin \left( t\sin x-2kx\right)
-\sin \left( t\sin x+2kx\right) \right] dx  \label{D2Y_c} \\
&=&-2\int_{0}^{\pi }x\sin \left( t\sin x-2kx\right) dx+\pi \int_{0}^{\pi
}\sin \left( t\sin x-2kx\right) dx.  \notag
\end{eqnarray}

Second, consider the case $m=2k+1$, thus the first integral on the RHS\ of (%
\ref{D2Y_a})\ becomes%
\begin{eqnarray*}
&&\frac{-1}{\pi }\int_{0}^{\pi }x^{2}\left[ \sin \left( t\sin x-\left(
2k+1\right) x\right) +\sin \left( t\sin x+\left( 2k+1\right) x\right) \right]
dx \\
&=&\frac{2}{\pi }\int_{0}^{\pi }x^{2}\sin \left( t\sin x\right) \cos \left(
\left( 2k+1\right) x\right) dx.
\end{eqnarray*}%
Following the same steps as before, but applying the trigonometric identity $%
\cos \left( \alpha +\beta \right) =\cos \alpha \cos \beta -\sin \alpha \sin
\beta $, we arrive at 
\begin{equation}
-2\int_{0}^{\pi }x\sin \left( t\sin x-\left( 2k+1\right) x\right) dx+\pi
\int_{0}^{\pi }\sin \left( t\sin x-\left( 2k+1\right) x\right) dx.
\label{D2Y_d}
\end{equation}%
Therefore, taking into account (\ref{D2Y_c}) and (\ref{D2Y_d}),%
\begin{eqnarray}
&&\left. \frac{\partial ^{2}Y_{\nu }\left( t\right) }{\partial \nu ^{2}}%
\right\vert _{\nu =m}+\left( -1\right) ^{m+1}\left. \frac{\partial
^{2}Y_{\nu }\left( t\right) }{\partial \nu ^{2}}\right\vert _{\nu =-m}
\label{D2Y_final} \\
&=&-2\int_{0}^{\pi }x\sin \left( t\sin x-mx\right) dx+\pi \int_{0}^{\pi
}\sin \left( t\sin x-mx\right) dx  \notag \\
&&-\pi \int_{0}^{\infty }e^{-t\sinh x}\left[ \left( -1\right)
^{m+1}e^{-mx}+e^{mx}\right] dx.  \notag
\end{eqnarray}%
Finally, notice that from the integral representation (\ref{Dn_Jnu})\ for $%
n=1$, and the integral representation (\ref{Ynu_int}), we see that the RHS\
of both (\ref{Reflection_Y2})\ and (\ref{D2Y_final})\ are equal, completing
thereby the proof.
\end{proof}

\begin{corollary}
Taking into account (\ref{DJ_integral}), Eqn. (\ref{Reflection_Y2})\ becomes 
\begin{equation}
\left. \frac{\partial ^{2}Y_{\nu }\left( t\right) }{\partial \nu ^{2}}%
\right\vert _{\nu =m}+\left( -1\right) ^{m+1}\left. \frac{\partial
^{2}Y_{\nu }\left( t\right) }{\partial \nu ^{2}}\right\vert _{\nu =-m}=-\pi
\,m!\sum_{k=0}^{m-1}\frac{J_{k}\left( t\right) }{k!\left( m-k\right) }\left( 
\frac{t}{2}\right) ^{k-m}.  \label{Reflection_Y2_closed-form}
\end{equation}
\end{corollary}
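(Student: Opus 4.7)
The corollary is a direct algebraic consequence of Theorem \ref{Reflection_Y2} combined with formula (\ref{DJ_integral}). The plan is simply to substitute the closed-form expression for $\left.\partial J_\nu(t)/\partial\nu\right|_{\nu=m}$ given in (\ref{DJ_integral}) into the right-hand side of (\ref{Reflection_Y2}), and verify that the $Y_m(t)$ terms cancel exactly.

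Concretely, I would start from the reflection formula
\begin{equation*}
\left.\frac{\partial^{2}Y_{\nu}(t)}{\partial\nu^{2}}\right|_{\nu=m}+(-1)^{m+1}\left.\frac{\partial^{2}Y_{\nu}(t)}{\partial\nu^{2}}\right|_{\nu=-m}=-2\pi\left.\frac{\partial J_{\nu}(t)}{\partial\nu}\right|_{\nu=m}+\pi^{2}J_{m}(t),
\end{equation*}
wait --- note the RHS of (\ref{Reflection_Y2}) actually contains $\pi^{2}Y_{m}(t)$, so I would use that. Then I insert
\begin{equation*}
\left.\frac{\partial J_{\nu}(t)}{\partial\nu}\right|_{\nu=m}=\frac{\pi}{2}Y_{m}(t)+\frac{m!}{2}\sum_{k=0}^{m-1}\frac{J_{k}(t)}{k!(m-k)}\left(\frac{t}{2}\right)^{k-m}
\end{equation*}
into the RHS, obtaining
\begin{equation*}
-2\pi\left[\frac{\pi}{2}Y_{m}(t)+\frac{m!}{2}\sum_{k=0}^{m-1}\frac{J_{k}(t)}{k!(m-k)}\left(\frac{t}{2}\right)^{k-m}\right]+\pi^{2}Y_{m}(t),
\end{equation*}
at which point the two $\pi^{2}Y_{m}(t)$ contributions annihilate each other, leaving exactly the right-hand side of (\ref{Reflection_Y2_closed-form}).

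There is essentially no obstacle: the work has already been done in proving Theorem \ref{Reflection_Y2}, and (\ref{DJ_integral}) is a known formula quoted from \cite{NIST}. The only thing to be careful about is reading off the correct sign of the $\pi^{2}Y_{m}(t)$ term in (\ref{Reflection_Y2}) so that the cancellation is clean; once that is done, the identity reduces to a one-line algebraic manipulation.
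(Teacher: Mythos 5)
Your proposal is correct and is exactly the argument the paper intends: the corollary is the immediate substitution of (\ref{DJ_integral}) into the right-hand side of (\ref{Reflection_Y2}), with the $\pi^{2}Y_{m}(t)$ terms cancelling and $-2\pi\cdot\frac{m!}{2}$ giving the factor $-\pi\,m!$. The momentary slip of writing $\pi^{2}J_{m}(t)$ is self-corrected and does not affect the argument.
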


\subsection{Modified Bessel functions}

\begin{theorem}
$\forall t\in 
\mathbb{C}
$ and $\mu \in 
\mathbb{R}
$, the following reflection formula holds true:%
\begin{eqnarray}
&&\left. \frac{\partial ^{n}I_{\nu }\left( t\right) }{\partial \nu ^{n}}%
\right\vert _{\nu =\mu }+\left( -1\right) ^{n+1}\left. \frac{\partial
^{n}I_{\nu }\left( t\right) }{\partial \nu ^{n}}\right\vert _{\nu =-\mu }
\label{Reflection_I_general} \\
&=&\frac{-2}{\pi }\sin \pi \mu \sum_{k=0}^{\left\lfloor n/2\right\rfloor }%
\binom{n}{2k}\left( -\pi ^{2}\right) ^{k}\frac{\partial ^{n-2k}K_{\mu
}\left( t\right) }{\partial \mu ^{n-2k}}  \notag \\
&&-2\cos \pi \mu \sum_{k=0}^{\left\lfloor \left( n-1\right) /2\right\rfloor }%
\binom{n}{2k+1}\left( -\pi ^{2}\right) ^{k}\frac{\partial ^{n-2k-1}K_{\mu
}\left( t\right) }{\partial \mu ^{n-2k-1}}.  \notag
\end{eqnarray}
\end{theorem}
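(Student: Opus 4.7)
The plan is to evaluate the integral representation (\ref{Dn_Inu_a}) at $\nu=\mu$ and at $\nu=-\mu$ and form the linear combination on the left-hand side of (\ref{Reflection_I_general}). This combination splits naturally into a finite integral over $[0,\pi]$ coming from the first term of (\ref{Dn_Inu_a}) and an improper integral over $[0,\infty)$ coming from the second. I expect the finite piece to vanish identically, while the improper piece will reproduce the right-hand side after an algebraic identification with derivatives of $K_{\mu}$.

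For the finite integral, the relevant trigonometric combination is $\cos(\mu x+\pi n/2)+(-1)^{n+1}\cos(\mu x-\pi n/2)$. Expanding by sum-to-product, this reduces to $-2\sin(\mu x)\sin(\pi n/2)$ when $n$ is even and to $2\cos(\mu x)\cos(\pi n/2)$ when $n$ is odd; in each parity class the numerical factor $\sin(\pi n/2)$ or $\cos(\pi n/2)$ vanishes, so the finite integral drops out of the combination.

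For the improper integral, I would substitute (\ref{f3(n)}) in the form $\Im[(i\pi-x)^{n}e^{i\pi\mu}] = p_{n}(x)\sin\pi\mu+q_{n}(x)\cos\pi\mu$ at $\nu=\mu$ and its analogue at $\nu=-\mu$, obtained by sending $\sin\pi\mu\mapsto-\sin\pi\mu$, and then separate the combined integrand into a $\sin\pi\mu$ component and a $\cos\pi\mu$ component. The $\sin\pi\mu$ component carries the kernel $p_{n}(x)[e^{-\mu x}+(-1)^{n}e^{\mu x}]$ while the $\cos\pi\mu$ component carries $q_{n}(x)[e^{-\mu x}+(-1)^{n+1}e^{\mu x}]$. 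Inserting the series (\ref{pn(x)_def})--(\ref{qn(x)_def}) and exchanging the finite sum with the integral, each resulting term becomes a monomial integral that, by (\ref{Dn_Knu_0}) together with the elementary identity $e^{-\mu x}+(-1)^{j}e^{\mu x}=(-1)^{j}[e^{\mu x}+(-1)^{j}e^{-\mu x}]$, equals $2(-1)^{j}\partial^{j}K_{\mu}(t)/\partial\mu^{j}$, with $j=n-2k$ in the sine sum and $j=n-2k-1$ in the cosine sum.

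The main obstacle is the sign bookkeeping: the factor $(-1)^{n+k}$ from $p_{n}$ (respectively $(-1)^{n+k+1}$ from $q_{n}$) must cancel the $(-1)^{n}$ (respectively $(-1)^{n+1}$) produced when the kernel is rewritten as a derivative of $K_{\mu}$, leaving precisely $(-1)^{k}\pi^{2k}=(-\pi^{2})^{k}/\pi^{-2k}\cdot\pi^{-2k}\cdots$, i.e. the $(-\pi^{2})^{k}$ appearing in (\ref{Reflection_I_general}) after absorbing the extra $\pi$ in the cosine sum. The conceptually interesting step, by contrast, is the automatic cancellation of the $[0,\pi]$ contribution, which is what reduces the identity to a purely algebraic matching between the $p_{n},q_{n}$ series and derivatives of $K_{\mu}$ rather than requiring further integral evaluations.
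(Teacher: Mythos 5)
Your proposal is correct and follows essentially the same route as the paper: evaluate (\ref{Dn_Inu_a}) at $\nu=\pm\mu$, observe that the $[0,\pi]$ integral cancels in the combination (the paper merely asserts this; your parity argument via $\sin(\pi n/2)$ and $\cos(\pi n/2)$ is exactly the justification), then expand $p_{n},q_{n}$ and identify each monomial integral with $\partial^{j}K_{\mu}(t)/\partial\mu^{j}$ through (\ref{Dn_Knu_0}). The sign bookkeeping you outline does close as claimed, giving precisely (\ref{Reflection_I_general}).
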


\begin{proof}
Applying the integral representation (\ref{Dn_Inu_a}), we arrive at 
\begin{eqnarray*}
&&\left. \frac{\partial ^{n}I_{\nu }\left( t\right) }{\partial \nu ^{n}}%
\right\vert _{\nu =\mu }+\left( -1\right) ^{n+1}\left. \frac{\partial
^{n}I_{\nu }\left( t\right) }{\partial \nu ^{n}}\right\vert _{\nu =-\mu } \\
&=&-\frac{1}{\pi }\int_{0}^{\infty }e^{-t\cosh x}\left\{ p_{n}\left(
x\right) \sin \pi \mu \left[ e^{-\mu x}+\left( -1\right) ^{n}e^{\mu x}\right]
\right. \\
&&\quad +\left. q_{n}\left( x\right) \sin \pi \mu \left[ e^{-\mu x}+\left(
-1\right) ^{n+1}e^{\mu x}\right] \right\} dx,
\end{eqnarray*}%
wherein the first integral of (\ref{Dn_Inu_a}) vanishes. Taking into account
the definitions of the polynomials $p_{n}$ and $q_{n}$ given in (\ref%
{pn(x)_def})\ and (\ref{qn(x)_def}), 
\begin{eqnarray}
&&\left. \frac{\partial ^{n}I_{\nu }\left( t\right) }{\partial \nu ^{n}}%
\right\vert _{\nu =\mu }+\left( -1\right) ^{n+1}\left. \frac{\partial
^{n}I_{\nu }\left( t\right) }{\partial \nu ^{n}}\right\vert _{\nu =-\mu }
\label{DIn_general} \\
&=&\frac{-1}{\pi }\sin \pi \mu \sum_{k=0}^{\left\lfloor n/2\right\rfloor }%
\binom{n}{2k}\left( -\pi ^{2}\right) ^{k}  \notag \\
&&\int_{0}^{\infty }e^{-t\cosh x}x^{n-2k}\left[ e^{-\mu x}\left( -1\right)
^{n}+e^{\mu x}\right] dx  \notag \\
&&-\cos \pi \mu \sum_{k=0}^{\left\lfloor \left( n-1\right) /2\right\rfloor }%
\binom{n}{2k+1}\left( -\pi ^{2}\right) ^{k}  \notag \\
&&\int_{0}^{\infty }e^{-t\cosh x}x^{n-2k-1}\left[ e^{-\mu x}\left( -1\right)
^{n+1}+e^{\mu x}\right] dx.  \notag
\end{eqnarray}%
Finally, taking into account the integral representation (\ref{Dn_Knu_0}),
we rewrite (\ref{DIn_general}) as (\ref{Reflection_I_general}), as we wanted
to prove.
\end{proof}

From the general expression (\ref{Reflection_I_general}), we obtain some
interesting particular cases. Thereby, on the one hand, for\ $\mu
=m=0,1,\ldots $,%
\begin{eqnarray*}
&&\left. \frac{\partial ^{n}I_{\nu }\left( t\right) }{\partial \nu ^{n}}%
\right\vert _{\nu =m}+\left( -1\right) ^{n+1}\left. \frac{\partial
^{n}I_{\nu }\left( t\right) }{\partial \nu ^{n}}\right\vert _{\nu =-m} \\
&=&2\left( -1\right) ^{m+1}\sum_{k=0}^{\left\lfloor \left( n-1\right)
/2\right\rfloor }\binom{n}{2k+1}\left( -\pi ^{2}\right) ^{k}\left. \frac{%
\partial ^{n-2k-1}K_{\nu }\left( t\right) }{\partial \nu ^{n-2k-1}}%
\right\vert _{\nu =m},
\end{eqnarray*}%
thus $\forall n=0,1,2$, we have respectively%
\begin{equation}
I_{m}\left( t\right) -I_{-m}\left( t\right) =0,  \label{Reflection_I0}
\end{equation}%
which is a well-known expression given in the the literature \cite[Eqn.
10.27.1]{NIST},%
\begin{equation}
\left. \frac{\partial I_{\nu }\left( t\right) }{\partial \nu }\right\vert
_{\nu =m}+\left. \frac{\partial I_{\nu }\left( t\right) }{\partial \nu }%
\right\vert _{\nu =-m}=2\left( -1\right) ^{m+1}\,K_{m}\left( t\right) ,
\label{Reflection_I}
\end{equation}%
and 
\begin{equation}
\left. \frac{\partial ^{2}I_{\nu }\left( t\right) }{\partial \nu ^{2}}%
\right\vert _{\nu =m}-\left. \frac{\partial ^{2}I_{\nu }\left( t\right) }{%
\partial \nu ^{2}}\right\vert _{\nu =-m}=4\left( -1\right) ^{m+1}\left. 
\frac{\partial K_{\nu }\left( t\right) }{\partial \nu }\right\vert _{\nu =m}.
\label{Reflection_I2}
\end{equation}%
Therefore, we state the following two results.

\begin{corollary}
We extend the formula given in (\ref{DI_integral})\ to negative integral
orders with the aid of the reflection formula (\ref{Reflection_I}),
resulting in%
\begin{equation}
\left. \frac{\partial I_{\nu }\left( t\right) }{\partial \nu }\right\vert
_{\nu =\pm m}=\left( -1\right) ^{m}\left[ -K_{m}\left( t\right) \pm \frac{m!%
}{2}\sum_{k=0}^{m-1}\frac{\left( -1\right) ^{k}I_{k}\left( t\right) }{%
k!\left( m-k\right) }\left( \frac{t}{2}\right) ^{k-m}\right] .
\label{Extension_I}
\end{equation}
\end{corollary}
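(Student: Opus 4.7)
The plan is to observe that the $\nu=+m$ case of (\ref{Extension_I}) is literally the known identity (\ref{DI_integral}), so only the $\nu=-m$ case requires work, and for that the reflection formula (\ref{Reflection_I}) reduces the task to pure algebra.

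Specifically, I would rearrange (\ref{Reflection_I}) as
\begin{equation*}
\left.\frac{\partial I_{\nu}(t)}{\partial\nu}\right|_{\nu=-m}
= 2(-1)^{m+1}K_{m}(t) - \left.\frac{\partial I_{\nu}(t)}{\partial\nu}\right|_{\nu=m},
\end{equation*}
and then substitute the closed form (\ref{DI_integral}) for the remaining $\nu=m$ derivative on the right-hand side. After collecting the two $K_m(t)$ contributions, the coefficient of $K_m(t)$ becomes $2(-1)^{m+1}+(-1)^{m+1}=-(-1)^m$ (i.e., $-K_m(t)$ times $(-1)^m$), while the finite sum picks up an overall factor of $-(-1)^m$. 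Factoring $(-1)^m$ out of both terms yields precisely the $\nu=-m$ branch of (\ref{Extension_I}), i.e.\ the case in which the symbol $\pm$ is instantiated as $-$ on both sides.

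Combining the two cases into the single expression (\ref{Extension_I}) is then just a matter of reading off the sign pattern: both the leading $(\pm 1)^m$ factor and the $\pm$ in front of the finite sum flip exactly as required when switching between the $+m$ and $-m$ cases, and the $(-1)^m$ from the reflection argument is absorbed into the overall $(\pm 1)^m$ prefactor. There is no genuine obstacle here beyond bookkeeping signs; the only point to be slightly careful about is not to double-count the sign $(-1)^m$ that already appears in (\ref{DI_integral}) versus the sign produced by the reflection formula. Once the two contributions to $K_m(t)$ are combined correctly, the identity follows immediately.
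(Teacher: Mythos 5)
Your derivation is exactly the route the paper intends (the corollary is stated without proof as an immediate combination of (\ref{DI_integral}) and (\ref{Reflection_I})): the $\nu=+m$ branch is (\ref{DI_integral}) itself, and solving (\ref{Reflection_I}) for the derivative at $\nu=-m$ and substituting (\ref{DI_integral}) gives $(-1)^{m}\left[-K_{m}(t)-\frac{m!}{2}\sum_{k=0}^{m-1}\frac{(-1)^{k}I_{k}(t)}{k!(m-k)}\left(\frac{t}{2}\right)^{k-m}\right]$, which is precisely the lower-sign branch of (\ref{Extension_I}). Only two cosmetic slips: the $K_{m}(t)$ coefficient combines as $2(-1)^{m+1}-(-1)^{m+1}=(-1)^{m+1}=-(-1)^{m}$ rather than $2(-1)^{m+1}+(-1)^{m+1}$, and (\ref{Extension_I}) carries the fixed prefactor $(-1)^{m}$ in both branches (unlike (\ref{Extension_J}) and (\ref{Extenxion_Y})), so there is no $(\pm 1)^{m}$ prefactor into which anything gets absorbed; your computed branch is nonetheless correct.
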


\begin{corollary}
Taking into account (\ref{DK_integral}), Eqn. (\ref{Reflection_I2})\ becomes%
\begin{equation}
\left. \frac{\partial ^{2}I_{\nu }\left( t\right) }{\partial \nu ^{2}}%
\right\vert _{\nu =m}-\left. \frac{\partial ^{2}I_{\nu }\left( t\right) }{%
\partial \nu ^{2}}\right\vert _{\nu =-m}=2\left( -1\right)
^{m+1}m!\sum_{k=0}^{m-1}\frac{K_{k}\left( t\right) }{k!\left( m-k\right) }%
\left( \frac{t}{2}\right) ^{k-m}  \label{Reflection_I2_closed-form}
\end{equation}
\end{corollary}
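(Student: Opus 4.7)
The plan is essentially a one-line substitution, so I organize it as follows. The reflection formula (\ref{Reflection_I2}) expresses the left-hand side of (\ref{Reflection_I2_closed-form}) in the compact form
\begin{equation*}
\left. \frac{\partial ^{2}I_{\nu }\left( t\right) }{\partial \nu ^{2}}\right\vert _{\nu =m}-\left. \frac{\partial ^{2}I_{\nu }\left( t\right) }{\partial \nu ^{2}}\right\vert _{\nu =-m}=4\left( -1\right) ^{m+1}\left. \frac{\partial K_{\nu }\left( t\right) }{\partial \nu }\right\vert _{\nu =m},
\end{equation*}
so the only remaining task is to replace $\partial_{\nu}K_{\nu}(t)|_{\nu=m}$ by a closed-form finite sum.

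The closed-form evaluation is exactly (\ref{DK_integral}), namely
\begin{equation*}
\left. \frac{\partial K_{\nu }\left( t\right) }{\partial \nu }\right\vert _{\nu =m}=\frac{m!}{2}\sum_{k=0}^{m-1}\frac{K_{k}\left( t\right) }{k!\left( m-k\right) }\left( \frac{t}{2}\right) ^{k-m},
\end{equation*}
which is a result imported from the NIST Handbook and may therefore be used without further justification. Substituting this into the right-hand side of (\ref{Reflection_I2}) and combining the numerical factors $4\cdot\tfrac{1}{2}=2$ gives
\begin{equation*}
4\left( -1\right) ^{m+1}\cdot\frac{m!}{2}\sum_{k=0}^{m-1}\frac{K_{k}\left( t\right) }{k!\left( m-k\right) }\left( \frac{t}{2}\right) ^{k-m}=2\left( -1\right) ^{m+1}m!\sum_{k=0}^{m-1}\frac{K_{k}\left( t\right) }{k!\left( m-k\right) }\left( \frac{t}{2}\right) ^{k-m},
\end{equation*}
which is precisely (\ref{Reflection_I2_closed-form}).

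There is essentially no obstacle here: the content of the corollary is that the two ingredients, the reflection formula (\ref{Reflection_I2}) proved via (\ref{Reflection_I_general}) and the classical closed-form (\ref{DK_integral}), fit together by a direct substitution. The only thing worth checking is the bookkeeping of the sign $(-1)^{m+1}$ and the factor $2$ versus $4$, but these match straightforwardly as shown above.
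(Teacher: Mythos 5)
Your proposal is correct and is exactly what the paper intends: the corollary is obtained by substituting the closed form (\ref{DK_integral}) for $\partial_{\nu}K_{\nu}(t)|_{\nu=m}$ into the reflection formula (\ref{Reflection_I2}) and combining the factors $4\cdot\tfrac{1}{2}=2$. The sign $(-1)^{m+1}$ and the numerical bookkeeping check out, so nothing further is needed.
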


On the other hand, taking in (\ref{Reflection_I_general})\ $\mu =m+\frac{1}{2%
}$, we have%
\begin{eqnarray*}
&&\left. \frac{\partial ^{n}I_{\nu }\left( t\right) }{\partial \nu ^{n}}%
\right\vert _{\nu =m+1/2}+\left( -1\right) ^{n+1}\left. \frac{\partial
^{n}I_{\nu }\left( t\right) }{\partial \nu ^{n}}\right\vert _{\nu =-m-1/2} \\
&=&\frac{2}{\pi }\left( -1\right) ^{m+1}\sum_{k=0}^{\left\lfloor
n/2\right\rfloor }\binom{n}{2k}\left( -\pi ^{2}\right) ^{k}\left. \frac{%
\partial ^{n-2k}K_{\nu }\left( t\right) }{\partial \nu ^{n-2k}}\right\vert
_{\nu =m+1/2},
\end{eqnarray*}%
thus $\forall n=0,1$, we have respectively%
\begin{equation*}
I_{m+1/2}\left( t\right) -I_{-m-1/2}\left( t\right) =\frac{2}{\pi }\left(
-1\right) ^{m+1}K_{m+1/2}\left( t\right) ,
\end{equation*}%
which is given in the literature \cite[Eqn. 10.47.11]{NIST}, and%
\begin{equation}
\left. \frac{\partial I_{\nu }\left( t\right) }{\partial \nu }\right\vert
_{\nu =m+1/2}+\left. \frac{\partial I_{\nu }\left( t\right) }{\partial \nu }%
\right\vert _{\nu =-m-1/2}=\frac{2}{\pi }\left( -1\right) ^{m+1}\left. \frac{%
\partial K_{\nu }\left( t\right) }{\partial \nu }\right\vert _{\nu =m+1/2},
\label{Reflection_I_half}
\end{equation}%
which is also fulfilled for $m=0$ by the formulas found in \cite[Eqn.
10.38.6\&7]{NIST}.

\begin{theorem}
$\forall t\in 
\mathbb{C}
$ and $m=0,1,\ldots $, the following reflection formula holds true:%
\begin{equation}
\left. \frac{\partial ^{n}K_{\nu }\left( t\right) }{\partial \nu ^{n}}%
\right\vert _{\nu =\mu }+\left( -1\right) ^{n+1}\left. \frac{\partial
^{n}K_{\nu }\left( t\right) }{\partial \nu ^{n}}\right\vert _{\nu =-\mu }=0.
\label{Reflection_K}
\end{equation}
\end{theorem}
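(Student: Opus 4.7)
The plan is to work directly from the two-sided integral representation of the $n$-th order derivative of the Macdonald function obtained earlier, namely
\begin{equation*}
\frac{\partial^{n}}{\partial\nu^{n}}K_{\nu}(t)=\frac{1}{2}\int_{-\infty}^{\infty}x^{n}e^{\nu x-t\cosh x}\,dx,
\end{equation*}
which was stated in (\ref{Dn_Knu}). The desired reflection formula should then follow from a single change of variables exploiting the parity of $\cosh x$, with no need to split cases on $n$ or to invoke any special function identities.

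Concretely, I would evaluate the integral at $\nu=-\mu$ and perform the substitution $x\mapsto -x$. Since $\cosh(-x)=\cosh x$, the exponential factor $e^{-\mu x-t\cosh x}$ becomes $e^{\mu x-t\cosh x}$, while $x^{n}$ picks up a factor $(-1)^{n}$ and the reversal of the limits contributes another sign that is absorbed by the differential $dx$. This yields the symmetry
\begin{equation*}
\left.\frac{\partial^{n}K_{\nu}(t)}{\partial\nu^{n}}\right|_{\nu=-\mu}=(-1)^{n}\left.\frac{\partial^{n}K_{\nu}(t)}{\partial\nu^{n}}\right|_{\nu=\mu}.
\end{equation*}

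Substituting this into the left-hand side of (\ref{Reflection_K}) gives
\begin{equation*}
\left.\frac{\partial^{n}K_{\nu}(t)}{\partial\nu^{n}}\right|_{\nu=\mu}\bigl[1+(-1)^{n+1}(-1)^{n}\bigr]=\left.\frac{\partial^{n}K_{\nu}(t)}{\partial\nu^{n}}\right|_{\nu=\mu}\bigl[1+(-1)^{2n+1}\bigr]=0,
\end{equation*}
which is the claim. Equivalently, one could work from the one-sided form (\ref{Dn_Knu_0}) and directly read off that the coefficients of $e^{\mu x}$ and $e^{-\mu x}$ in the integrand of the combination both vanish; this is the same computation packaged differently.

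There is essentially no obstacle here: the statement is a clean parity identity built into the integral representation, and the only care to be taken is noting that the $\mathbb{Z}$ versus $\mathbb{R}$ hypothesis on $\mu$ is immaterial because the representation (\ref{Dn_Knu}) is valid for all real $\mu$ (and even complex, for $\Re t>0$), so the formula holds as stated regardless of whether $\mu$ is integral or half-integral.
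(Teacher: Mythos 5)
Your proposal is correct and rests on the same idea as the paper's proof: parity of the integrand in the two-sided representation (\ref{Dn_Knu}). The only cosmetic difference is that you extract the symmetry $\left.\partial_\nu^{n}K_\nu(t)\right|_{\nu=-\mu}=(-1)^{n}\left.\partial_\nu^{n}K_\nu(t)\right|_{\nu=\mu}$ via the substitution $x\mapsto -x$ and avoid any case split, whereas the paper combines the two evaluations into a single integral and treats $n$ even and odd separately (odd integrands $x^{2k}\sinh\mu x$ and $x^{2k+1}\cosh\mu x$); the computations are identical in substance.
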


\begin{proof}
First, consider the case $n=2k$ and take into account the integral
representation (\ref{DKnu_int}), thereby (\ref{Reflection_K})\ reads as%
\begin{equation*}
\left. \frac{\partial ^{2k}K_{\nu }\left( t\right) }{\partial \nu ^{2k}}%
\right\vert _{\nu =\mu }-\left. \frac{\partial ^{2k}K_{\nu }\left( t\right) 
}{\partial \nu ^{2k}}\right\vert _{\nu =-\mu }=\int_{-\infty }^{\infty
}x^{2k}e^{-t\cosh x}\sinh \left( \mu x\right) dx=0,
\end{equation*}%
which vanishes by parity. Second, consider the case $n=2k+1$ and the
integral representation (\ref{DKnu_int}), thus (\ref{Reflection_K})\ becomes%
\begin{equation*}
\left. \frac{\partial ^{2k+1}K_{\nu }\left( t\right) }{\partial \nu ^{2k+1}}%
\right\vert _{\nu =\mu }+\left. \frac{\partial ^{2k+1}K_{\nu }\left(
t\right) }{\partial \nu ^{2k+1}}\right\vert _{\nu =-\mu }=\int_{-\infty
}^{\infty }x^{2k+1}e^{-t\cosh x}\cosh \left( \mu x\right) dx=0,
\end{equation*}%
which is also null by parity. This completes the proof.
\end{proof}

\begin{corollary}
We extend the formula given in (\ref{DK_integral})\ to negative integral
orders, taking $n=1$ and $\mu =m=0,1,\ldots $ in (\ref{Reflection_K}),
resulting in%
\begin{equation}
\left. \frac{\partial K_{\nu }\left( t\right) }{\partial \nu }\right\vert
_{\nu =\pm m}=\pm \frac{m!}{2}\sum_{k=0}^{m-1}\frac{K_{k}\left( t\right) }{%
k!\left( m-k\right) }\left( \frac{t}{2}\right) ^{k-m}.  \label{Extension_K}
\end{equation}
\end{corollary}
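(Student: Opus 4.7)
The plan is to read off the corollary directly from the theorem just proved, using the existing formula (\ref{DK_integral}) as the base case. I expect the whole argument to fit in a few lines, since no new analysis is needed beyond combining two ingredients.

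First I would specialize the reflection formula (\ref{Reflection_K}) to $n=1$ and $\mu=m\in\{0,1,\ldots\}$. This gives the simple antisymmetry
\[
\left.\frac{\partial K_\nu(t)}{\partial \nu}\right|_{\nu=m}+\left.\frac{\partial K_\nu(t)}{\partial \nu}\right|_{\nu=-m}=0,
\]
so that the value at $\nu=-m$ is just the negative of the value at $\nu=m$. Next, I would substitute the known closed form (\ref{DK_integral}) for $\partial K_\nu/\partial\nu$ at $\nu=m$. Combining the two, one obtains the two formulas
\[
\left.\frac{\partial K_\nu(t)}{\partial \nu}\right|_{\nu=m}=\frac{m!}{2}\sum_{k=0}^{m-1}\frac{K_k(t)}{k!\,(m-k)}\left(\frac{t}{2}\right)^{k-m},
\]
\[
\left.\frac{\partial K_\nu(t)}{\partial \nu}\right|_{\nu=-m}=-\frac{m!}{2}\sum_{k=0}^{m-1}\frac{K_k(t)}{k!\,(m-k)}\left(\frac{t}{2}\right)^{k-m},
\]
which collapse into the single $\pm$ statement claimed in (\ref{Extension_K}).

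There is essentially no obstacle here: the corollary is a direct corollary in the strict sense. The only potential subtlety worth a sentence of comment is checking that the convention $\sum_{k=0}^{-1}=0$ makes the $m=0$ case consistent, namely that both sides vanish (which is immediate from $K_\nu=K_{-\nu}$). Everything else is pure bookkeeping of signs in the reflection formula.
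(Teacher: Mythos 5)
Your proposal is correct and is exactly the argument the paper intends: specialize (\ref{Reflection_K}) to $n=1$, $\mu=m$, note the resulting antisymmetry of $\partial K_\nu/\partial\nu$ in the order, and combine it with the known formula (\ref{DK_integral}) at $\nu=m$. The remark about the empty sum handling the $m=0$ case is a nice (optional) check, consistent with $K_\nu=K_{-\nu}$.
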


\begin{corollary}
From (\ref{Dn_Knu}) with $n=1$\ and (\ref{Extension_K}), we obtain, $\forall
m=0,1,\ldots $ 
\begin{equation}
\int_{-\infty }^{\infty }x\,e^{\pm mx-t\cosh x}dx=\pm m!\sum_{k=0}^{m-1}%
\frac{K_{k}\left( t\right) }{k!\left( m-k\right) }\left( \frac{t}{2}\right)
^{k-m}.  \label{Int_new_K}
\end{equation}
\end{corollary}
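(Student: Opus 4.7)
The plan is to read off the identity directly by equating two expressions for $\partial K_\nu(t)/\partial\nu$ evaluated at $\nu=\pm m$. First I would recall the integral representation (\ref{Dn_Knu}) specialized to $n=1$, which gives
\[
\frac{\partial K_\nu(t)}{\partial \nu} \;=\; \frac{1}{2}\int_{-\infty}^{\infty} x\, e^{\nu x - t\cosh x}\,dx,
\]
valid for $\Re t>0$. Substituting $\nu=\pm m$ immediately identifies the left-hand side of (\ref{Int_new_K}) (up to the factor $1/2$) with $2\,\partial_\nu K_\nu(t)|_{\nu=\pm m}$.

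Next I would invoke the extension formula (\ref{Extension_K}) obtained as the previous corollary, namely
\[
\left.\frac{\partial K_\nu(t)}{\partial \nu}\right|_{\nu=\pm m} \;=\; \pm\,\frac{m!}{2}\sum_{k=0}^{m-1}\frac{K_k(t)}{k!(m-k)}\left(\frac{t}{2}\right)^{k-m},
\]
and substitute this expression on the right. Clearing the factor $1/2$ on both sides yields precisely (\ref{Int_new_K}), completing the argument.

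There is no real obstacle here: the result is essentially an immediate combination of the integral representation of $\partial_\nu K_\nu$ with the closed-form extension (\ref{Extension_K}) to negative integral orders, which in turn is a consequence of the reflection formula (\ref{Reflection_K}) specialized to $n=1$, $\mu=m$, together with the known value (\ref{DK_integral}) at nonnegative integers. The only small point worth mentioning is to verify the sign convention: the $+m$ case corresponds to the $+$ sign in (\ref{Extension_K}) and the $-m$ case to the $-$ sign, which matches the $\pm$ in the exponent of the integrand on the left-hand side, so the $\pm$ in (\ref{Int_new_K}) is consistent.
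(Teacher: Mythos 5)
Your argument is correct and is exactly the route the paper intends: specialize the integral representation (\ref{Dn_Knu}) to $n=1$, evaluate at $\nu=\pm m$, and equate with the closed form (\ref{Extension_K}), clearing the factor $1/2$. Nothing more is needed.
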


\section{Conclusions\label{Section: Conclusions}}

On the one hand, we have obtained new integral expressions for the $n$-th
derivatives of the Bessel functions with respect to the order in (\ref%
{Dn_Jnu}), (\ref{Dn_Ynu}), (\ref{Dn_Inu}), and (\ref{Dn_Knu}). As a
by-product, we have calculated an integral, which does not seem to be
reported in the literature, in three forms, i.e. (\ref{int_new_G}), (\ref%
{int_new_F}), and (\ref{Int_new_K}), depending on the parameter $\nu $.

On the other hand, we have derived reflection formulas for the first and
second order derivative of $J_{\nu }\left( t\right) $ in (\ref{Reflection_J}%
)\ and (\ref{Reflection_J2_closed-form}), and of $Y_{\nu }\left( t\right) $
in (\ref{Reflection_Y})\ and (\ref{Reflection_Y2_closed-form}). Also, for
arbitrary order $\nu $, we have derived the reflection formula (\ref%
{Reflection_I_general})\ for the $n$-th order derivative of $I_{\nu }\left(
t\right) $, and the reflection formula (\ref{Reflection_K}) for the $n$-th
order derivative of $K_{\nu }\left( t\right) $ in. As particular cases, for $%
I_{\nu }\left( t\right) $ and integral order $m$, we have obtained the
reflection formula (\ref{Reflection_I}) for the first order derivative, and (%
\ref{Reflection_I2_closed-form})\ for the second order derivative. Also, for
half-integral order, we have obtained formula (\ref{Reflection_I_half}).

\bibliographystyle{amsplain}

\begin{thebibliography}{10}

\bibitem{Andrews} G.E. Andrews, R. Askey, R. Roy, \textit{Special functions:
Encyclopedia of Mathematics and its Applications}, vol. 71, Cambridge
University Press, NY, 2004.

\bibitem{Apelblat} A. Apelblat, N. Kravitsky, \textit{Integral representations of
derivatives and integrals with respect to the order of the Bessel functions $%
J_{\nu }(t)$, $I_{\nu }(t)$, the Anger function $\mathbf{J}_{\nu }(t)$, and
the integral Bessel function $Ji_{\nu }(t)$.} IMA J. Appl. Math. \textbf{34} (1985)
187--210.

\bibitem{BrychovNew} Y.A. Brychkov, \textit{Higher derivatives of the Bessel
functions with respect to the order.} Integr. Transf. Spec. Funct. \textbf{27} (2016)
566--577.

\bibitem{Dunster} T.M. Dunster, \textit{On the order derivatives of Bessel
functions.} Constr. Approx. \textbf{46} (2017) 47--68.

\bibitem{DBesselJL} J.L. Gonz\'{a}lez-Santander, \textit{Closed-form expressions
for derivatives of Bessel functions with respect to the order.} J. Math.
Anal. Appl. \textbf{466} (2018) 1060--1081. 

\bibitem{Lebedev} N.N. Lebedev, \textit{Special Functions and their Applications},
Prentice-Hall Inc, NJ, 1965.

\bibitem{NIST} F.W.J. Olver, D.W. Lozier, R.F. Boisvert, C.W. Clark, \textit{NIST
Handbook of Mathematical Functions}, Cambridge University Press, NY,
2010.

\bibitem{Sesma} J. Sesma, \textit{Derivatives with respect to the order of the Bessel
function of the first kind.} (2014) arXiv:1401.4850.

\bibitem{TakahasiMori} H. Takahasi, M. Mori, \textit{Double exponential
formulas for numerical integration.} Publ. Res. I. Math. Sci. \textbf{9}(3) (1974) 721--741.

\bibitem{Watson} G.N. Watson \textit{A Treatise on the theory of Bessel
functions}. Second Edition. Cambridge University Press, Cambridge, 2006.

\end{thebibliography}

\end{document}